\documentclass[10pt]{amsart}
\usepackage{amsmath,amssymb,mathtools}
\usepackage[T1]{fontenc}
\usepackage{lmodern}
\usepackage{hyperref}
\hypersetup{hidelinks}
\newtheorem{theorem}{Theorem}[section]
\newtheorem{proposition}[theorem]{Proposition}
\newtheorem{lemma}[theorem]{Lemma}

\theoremstyle{definition}

\theoremstyle{remark}
\newtheorem{remark}[theorem]{Remark}
\newcommand{\R}{\mathbb R}
\newcommand{\ii}{\sqrt{-1}}
\newcommand{\tr}{\operatorname{tr}}
\newcommand{\Lev}{\mathcal L}
\newcommand{\Ric}{\operatorname{Ric}}
\newcommand{\Rm}{\operatorname{Rm}}
\newcommand{\ddc}{dd^c}
\newcommand{\vol}{d\mu_g}
\newcommand{\Herm}{\operatorname{Herm}}
\newcommand{\Lip}{\operatorname{Lip}}
\newcommand{\norm}[1]{\lVert#1\rVert}
\numberwithin{equation}{section}
\title[Ricci pinching on K\"ahler manifolds]{Ricci pinching on complete K\"ahler manifolds with nonnegative bisectional curvature}
\author{Hanzhang Yin}
\address{School of Mathematics, Harbin Institute of Technology, Harbin, Heilongjiang 150001, China}
\email{3199877769@qq.com}
\email{YinHZ@hit.edu.cn}
\thanks{The author is supported by the Research Start-up Fund of Harbin Institute of Technology.}
\date{}
\begin{document}
\begin{abstract}
We prove that a connected complete K\"ahler manifold of complex dimension
at least two with nonnegative holomorphic bisectional curvature and
uniformly pinched Ricci curvature is compact or flat. This removes the
Euclidean volume growth assumption from the flatness theorem of Liu and
Ni \cite{LiuNi} for uniform Ricci pinching. We first construct a Lipschitz
plurisubharmonic function with finite total Monge--Amp\`ere mass by solving
a global Bellman equation. Weighted estimates for a holomorphic canonical
section then give a finite top Ricci integral. Ricci pinching implies
that the Ricci form belongs to $L^n$, and its Hodge heat evolution has
zero top integral by a Stokes argument and scalar heat decay.
\par\noindent\textit{Keywords.} K\"ahler manifolds, Ricci pinching,
nonnegative bisectional curvature, Bellman equations, Monge--Amp\`ere mass.
\end{abstract}
\maketitle

\section{Introduction}\label{sec:introduction}

The Bonnet--Myers theorem gives compactness of a complete Riemannian
manifold whose Ricci tensor has a uniform positive lower bound. A related
question asks whether a pointwise comparison of the smallest and largest
Ricci eigenvalues implies compactness or flatness under suitable
curvature assumptions. In real dimension three, Hamilton conjectured that a complete connected
Riemannian manifold with nonnegative, uniformly pinched Ricci curvature
is compact or flat, see \cite[Conjecture~3.39]{CLN}.
This conjecture was solved through the work of Lott \cite{Lott},
Deruelle--Schulze--Simon \cite{DSS}, and Lee--Topping \cite{LeeTopping}.

Liu and Ni \cite[Theorem~1.1]{LiuNi} proved that a complete noncompact
K\"ahler manifold of complex dimension at least two is flat if it has
nonnegative bisectional curvature, Euclidean volume growth, and
\eqref{eq:pinching}.
We remove their volume growth assumption, and prove the following result in K\"ahler geometry.

\begin{theorem}\label{thm:main}
Let $(M^n,g,J)$ be a connected complete K\"ahler manifold of complex
dimension $n\geq2$, with Riemannian metric $g$, complex structure $J$,
and nonnegative holomorphic bisectional curvature.
Suppose that there is a constant $\kappa>0$ such that
\begin{equation}\label{eq:pinching}
 \Ric_{\min}(x)\geq\kappa\Ric_{\max}(x)
 \qquad\text{for all }x\in M.
\end{equation}
Then $M$ is compact or flat.
\end{theorem}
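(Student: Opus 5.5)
We argue by contradiction: assume $M$ is noncompact and not flat. Nonnegative bisectional curvature gives $\Ric\geq0$, and the pinching \eqref{eq:pinching} gives $\Ric\geq\kappa\,\Ric_{\max}\,g\geq\frac{\kappa}{n}S\,g$, where $S$ is the scalar curvature. The strategy, following the abstract, is to show two things about the Ricci form $\rho$. On one hand, the top power $\int_M\rho^n$ is finite, and under the contradiction hypothesis it is strictly positive. On the other hand, an evolution argument forces this integral to vanish. These two facts are incompatible.

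\textbf{Step 1 (a plurisubharmonic exhaustion of finite mass).} We first solve a global Bellman-type equation, realized as an envelope of Monge--Amp\`ere type problems, on an exhaustion of $M$ and pass to a limit. The goal is a Lipschitz plurisubharmonic function $u$ with $u\to\infty$ at infinity and $\int_M(\ddc u)^n<\infty$. The Lipschitz bound should come from comparison with distance-like functions. Nonnegative bisectional curvature is what keeps the Laplacian and Hessian comparison under control.

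\textbf{Step 2 (a finite top Ricci integral).} We then use $u$ as a weight in $L^2$ estimates of H\"ormander type. These produce a holomorphic section of $K_M^{-1}$, or of a power of it, whose weighted norm is controlled by $e^{-cu}$. Since $\rho=-\ddc\log\det g$ is plurisubharmonic in the appropriate sense, Poincar\'e--Lelong and Chern--Levine--Nirenberg type comparisons bound $\int_M\rho^n$ in terms of the Monge--Amp\`ere mass of $u$. Because $\rho\geq0$, the pinching gives $\rho^n\gtrsim \kappa^{n} S^n\,\vol$, and hence $\int_M S^n\,\vol<\infty$, that is, $\rho\in L^n$. If $M$ is not flat, then $S>0$ somewhere, and by the strong maximum principle for the Ricci tensor everywhere. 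Therefore $\int\rho^n>0$.

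\textbf{Step 3 (heat evolution forces zero).} Let $\rho_t=e^{-t\Delta_{H}}\rho$ be the Hodge heat evolution. It stays closed and of type $(1,1)$, and by Bochner with nonnegative bisectional curvature it stays nonnegative. The cohomology class is preserved, so the difference $\rho_t^n-\rho^n$ is exact: it equals $d$ of a form built from $\int_0^t d^*\rho_s\,ds$ wedged with mixed powers. Using $\rho\in L^n$ together with cutoffs, a Stokes argument shows $\int\rho_t^n=\int\rho^n$. Pointwise, $|\rho_t|\leq e^{t\Delta}|\rho|$ by Kato's inequality. For $L^n$ initial data, the scalar heat kernel decay on a noncompact manifold with $\Ric\geq0$ gives $\int(e^{t\Delta}|\rho|)^n\to0$ as $t\to\infty$. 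We would prove this by splitting $|\rho|$ into a compactly supported piece, which tends to zero by the Li--Yau bound $H\leq C/V(\sqrt t)$ and infinite volume, and a small $L^n$ tail, which is controlled by contractivity. Hence $\int\rho_t^n\to0$, contradicting Step 2.

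\textbf{Main obstacle.} The crux is Step 1, together with its link to Step 2. We must build $u$ with finite total mass without Euclidean volume growth, and then transfer that finiteness to $\int\rho^n$ through weighted estimates. The first thing to try is a maximal plurisubharmonic envelope of $\log$-distance functions, regularized by the Bellman equation $\max_{a}(\mathcal L_a u)=0$. We would bound its mass by a Stokes computation on sublevel sets $\{u<r\}$, using the Lipschitz bound.
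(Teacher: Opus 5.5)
Your outer frame matches the paper's. Once $\int_M\rho^n<\infty$, pinching puts $|\rho|$ and $\Lambda_\omega\rho$ in $L^n$, and then Hodge heat flow, a Stokes identity and scalar $L^n$ decay force $\int_M\rho^n=0$. Step~3 is essentially sound, and your Li--Yau/infinite-volume decay argument is a valid substitute for the spectral one. You still need to supply the $L^n$ bound on the primitive. By the K\"ahler identities, $\int_0^td^*\rho_s\,ds$ is, up to a constant, $d^cF_t$ with $F_t=2\int_0^tP_s(\Lambda_\omega\rho)\,ds$, and under $\Ric\geq0$ one has $\norm{\nabla P_sf}_n\leq(2s)^{-1/2}\norm{f}_n$. (The strong maximum principle you invoke is a Ricci-flow statement and does not apply to a fixed metric. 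You do not need it: $\Ric_p>0$ at one point plus continuity gives $\int\rho^n>0$.) The genuine gap is that finiteness of $\int_M\rho^n$, which is the whole new content of the theorem, is not proved. Steps~1--2 only state goals, and the mechanisms you suggest would not deliver them.

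(i) PSH functions of logarithmic growth are what Euclidean volume growth supplies, and they need not exist otherwise. Take Cao's $U(n)$-invariant steady soliton on $\mathbb{C}^n$, which has positive bisectional curvature and volume growth $r^n$. There $d(0,z)\sim c\log|z|$, so a PSH function of size $O(\log d)$ is $o(\log|z|)$ on every complex line through $0$, hence constant. Nothing in your Step~1 uses pinching, so an envelope of log-distance functions degenerates in general. Separately, a Stokes computation on $\{u<r\}$ with only a Lipschitz bound does not control $\int_{\{u=r\}}d^cu\wedge(\ddc u)^{n-1}$.

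The paper instead works at linear growth, never needs an exhaustion, and gets finite mass from the equation. The limit $v$ of the regularized Bellman resolvent problems is a viscosity supersolution of $\det{}_+(\Lev\phi)\leq(\tau h)^n$ with $h\in C^\infty_c(M)$, so $(\ddc v)^n\leq2^nn!(\tau h)^n\vol$. The trace inequality with source $\tau h$ excludes pluriharmonicity. The bound $|\nabla u|\leq1$ comes from a Bochner maximum principle, not from distance barriers.

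(ii) A section $t$ of $K_M^{-1}$ gives $\ddc\log|t|^2=-\rho+[Z]$, which is a lower bound for $\rho$. You need a holomorphic $n$-form $s$, for which $\ddc\log|s|^2\geq\rho$.

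(iii) Producing $s$ with $\int|s|^2e^{-q\phi}\vol<\infty$ requires a strictly PSH weight. The paper gets it from the Ni--Tam parallel-kernel theorem applied to $P_tv$, combined with Lemma~\ref{lem:no-product}. This is where pinching enters, through de Rham splitting on the universal cover plus Bonnet--Myers, and your plan has no counterpart.

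(iv) With only $|s|^2e^{-q\phi}\in L^1$ and no volume growth, you cannot pass to a pointwise bound $\log|s|^2\leq q\phi+C$ and run a Chern--Levine--Nirenberg comparison. The paper instead sets $f=|s|^2e^{-q\phi}$ and uses the identity $f(\ddc f-f\rho+qf\ddc\phi)=df\wedge d^cf$. It truncates with $V=\log(1+f)$, so that $q\ddc\phi+\ddc V\geq\frac{f}{1+f}\rho$. Iterated cutoff estimates then give $\int\chi_R^N(\frac{f}{1+f})^n\rho^n\leq q^n\int\chi_R^N(\ddc\phi)^n+\frac{D}{R}\int_Mf\vol$. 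The error vanishes as $R\to\infty$, and only afterwards is the weight removed by rescaling $f\mapsto\delta^{-1}f$ with $\delta\downarrow0$.
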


Here $\Ric$ denotes the Ricci tensor, and $\Ric_{\min}(x)$ and
$\Ric_{\max}(x)$ are the smallest and largest eigenvalues, respectively,
of the endomorphism $g_x^{-1}\Ric_x:T_xM\to T_xM$.

The central difficulty is to prove
\begin{equation}\label{eq:intro-finite}
 \int_M\rho^n<\infty,
\end{equation}
where $\rho(X,Y)=\Ric(JX,Y)$ is the Ricci form and $\rho^n$ is its
$n$th wedge power. We construct a continuous plurisubharmonic function
$v$ such that
\begin{equation}\label{eq:intro-potential}
 \Lip(v)\leq1,\qquad \int_M(\ddc v)^n<\infty.
\end{equation}
Here $\Lip(v)$ is the global Lipschitz constant with respect to the
distance of $g$, $\ddc v=\ii\partial\bar\partial v$, and the top power
is the Bedford--Taylor measure. We use PSH as an abbreviation for
plurisubharmonic. The limit also satisfies a trace inequality with a
nonzero source of compact support, which excludes the pluriharmonic
case even when its total Monge--Amp\`ere mass is zero; see
\eqref{eq:upper-tests} and Proposition~\ref{prop:finite-ma}.

To construct $v$, we regularize the Bellman minimum in
\eqref{eq:bellman}, add a positive trace term, and solve the resolvent
equation \eqref{eq:resolvent}. The global barriers and comparison
principle in Lemmas~\ref{lem:barriers} and \ref{lem:comparison} give a
smooth solution by Proposition~\ref{prop:resolvent}. A Bernstein estimate
first bounds its gradient, and Proposition~\ref{prop:sharp-gradient}
then gives $|\nabla u|\leq1$. The normalized solutions converge locally
uniformly to $v$. Lemma~\ref{lem:limit-tests} and
Proposition~\ref{prop:finite-ma} give \eqref{eq:intro-potential}.

Next, the heat regularizations in
Proposition~\ref{prop:strict-approximation} are strictly PSH, have
gradient at most one, and differ from $v$ by a common bounded amount.
The parallel kernel alternatives of Ni and Tam reduce strictness to
Lemma~\ref{lem:no-product}: a proper nonzero parallel complex subbundle,
together with Ricci pinching, would imply a uniform positive Ricci lower
bound and hence compactness. A holomorphic canonical section gives the
identity \eqref{eq:energy}. The weighted mixed estimates in
Lemma~\ref{lem:weighted-reduction} and
Proposition~\ref{prop:top-comparison} then imply
\eqref{eq:intro-finite}, by the successive limits in
Proposition~\ref{prop:finite-ricci-criterion}.

Finally, Ricci pinching and \eqref{eq:intro-finite} imply that $\rho$
belongs to $L^n(M)$, as shown in Lemma~\ref{lem:finite-mass-flat}.
The Hodge heat construction of Ni and Tam
\cite[Theorem~4.1]{NiTam13} and the scalar heat estimates in
Lemmas~\ref{lem:scalar-decay} and \ref{lem:integrated-gradient} apply.
The Stokes identity in Lemma~\ref{lem:critical-stokes} preserves the top
integral at each fixed heat time, while scalar heat decay forces it to
vanish; see Proposition~\ref{prop:critical-vanishing}.
This argument follows the critical vanishing method of Liu and Ni
\cite[Section~4]{LiuNi}. The construction of
\eqref{eq:intro-potential} and the proof of \eqref{eq:intro-finite}
remove the volume growth assumption.

Section~\ref{sec:preliminaries} gives the conventions and geometric
results used in the proof. Section~\ref{sec:construction} constructs the
PSH function with finite Monge--Amp\`ere mass.
Section~\ref{sec:ricci-mass} proves finiteness of the Ricci integral.
Section~\ref{sec:critical} proves vanishing under critical integrability,
and Section~\ref{sec:conclusion} completes the proof of
Theorem~\ref{thm:main} by passing to the universal cover.

\section{Geometric and analytic preliminaries}\label{sec:preliminaries}

\subsection{Conventions}
All manifolds are nonempty, smooth, Hausdorff, second countable, and without
boundary. Write $d$ for the Riemannian distance of $g$, $\nabla$ for
its Levi--Civita connection, $\Rm_g$ for its curvature tensor, and
$\nabla^2f$ for the covariant Hessian of a function $f$. Completeness
refers to $d$. Put
\[
 \omega(X,Y)=g(JX,Y),\qquad \rho(X,Y)=\Ric(JX,Y),\qquad
 \vol=\frac{\omega^n}{n!}.
\]
Here $\omega$ is the K\"ahler form, $\rho$ is the Ricci form, and
$\vol$ is the Riemannian volume measure. For real functions, use
\begin{equation}\label{eq:dc-convention}
 d^cf=-\frac12df\circ J,\qquad \ddc f=\ii\partial\bar\partial f.
\end{equation}
The operators $\partial$ and $\bar\partial$ are the Dolbeault
operators. A function is PSH if its restriction to each holomorphic
disc is subharmonic; for a smooth function this is equivalent to
$\ddc f\geq0$. It is strictly PSH if $\ddc f>0$, and pluriharmonic
if $\ddc f=0$ in the sense of distributions.
A real $(1,1)$-form $\alpha$ is nonnegative when
$\alpha(X,JX)\geq0$. All inequalities between such forms use this order.
The scalar Laplacian is the real Laplace--Beltrami operator
\[
 \Delta f=\tr_g\nabla^2 f,
\]
with nonpositive $L^2$ spectrum. We write $\Lambda_\omega\alpha$
for the trace of a real $(1,1)$-form $\alpha$ with respect to $\omega$.
Thus
\begin{equation}\label{eq:trace-real}
 \Lambda_\omega\ddc f=\frac12\Delta f.
\end{equation}

At a point choose a real orthonormal frame
$e_1,\ldots,e_n,Je_1,\ldots,Je_n$. The Hermitian matrix used for the
Bellman operator is
\begin{equation}\label{eq:levi-matrix}
 (\Lev f)_{j\bar k}=\frac14\bigl(
  \nabla^2 f(e_j,e_k)+\nabla^2 f(Je_j,Je_k)
  +\ii[\nabla^2 f(e_j,Je_k)-\nabla^2 f(Je_j,e_k)]\bigr).
\end{equation}
The eigenvalues of $\ddc f$ relative to
$\omega$ are twice those of $\Lev f$. Consequently
\begin{equation}\label{eq:levi-normalizations}
 \tr\Lev f=\frac14\Delta f,\qquad
 (\ddc f)^n=2^n n!\det(\Lev f)\vol.
\end{equation}
These formulas also specify the normalization when the last identity
is interpreted as an equality of Bedford--Taylor measures.
The parallel orthogonal complex structure has vanishing Nijenhuis
tensor; smooth holomorphic coordinates therefore exist by the
integrability theorem \cite[Theorem~2]{Malgrange}. Locally smooth
adapted orthonormal frames can be obtained using the smooth positive
square root of a positive matrix \cite[Theorem~2.6]{DPP}. We use the
exterior derivative $d$ and the usual wedge product throughout,
with the conventions of \cite[Chapters~14 and 16]{LeeSM}.

We use the metric norm on real $(1,1)$-forms. If $A$ is the matrix of
such a form in the convention of \eqref{eq:levi-matrix}, this norm is
$2\norm{A}_{\mathrm{HS}}$, where $\norm{A}_{\mathrm{HS}}=(\tr(A^*A))^{1/2}$
is the Hilbert--Schmidt norm and $A^*$ is the conjugate transpose.
In particular, a nonnegative form satisfies
\begin{equation}\label{eq:form-order-bound}
 0\leq\alpha\leq |\alpha|\omega.
\end{equation}
For nonnegative forms $\alpha_1,\ldots,\alpha_n$, it follows that
\begin{equation}\label{eq:mixed-norm}
 0\leq\frac{\alpha_1\wedge\cdots\wedge\alpha_n}{\vol}
 \leq n!\prod_{j=1}^n|\alpha_j|.
\end{equation}
We use the usual positivity of products of nonnegative $(1,1)$-forms
\cite[Section~1.C]{Demailly}. All mixed forms in the proof are products
of smooth closed forms on $M$.

\subsection{Exhaustions and cutoffs}
The distance heat regularization of Ni and Tam
\cite[Proposition~3.1]{NiTam13} supplies, under nonnegative bisectional
curvature, a nonnegative smooth proper function $b$ with
\begin{equation}\label{eq:exhaustion}
 |\nabla b|\leq1,\qquad -C\omega\leq\ddc b\leq C\omega.
\end{equation}
If $\theta$ is a fixed smooth cutoff on the
line, constant near zero and outside a bounded interval, then
$\chi_R=\theta(b/R)$ can be chosen so that, for $R\geq1$,
\begin{equation}\label{eq:cutoffs}
\begin{gathered}
 0\leq\chi_R\leq1,\quad \chi_R\in C_c^\infty(M),\quad
 \chi_R\longrightarrow1\text{ pointwise},\\
 |\nabla\chi_R|\leq\frac{C}{R},\quad
 -\frac{C}{R}\omega\leq\ddc\chi_R\leq\frac{C}{R}\omega.
\end{gathered}
\end{equation}
Constants denoted by $C$ may change from one estimate to the next;
their dependence is specified when uniformity is needed.
For every integer $N\geq2$, differentiation gives
\begin{equation}\label{eq:cutoff-powers}
 |d(\chi_R^N)|\leq\frac{C_N}{R}\chi_R^{N-1},\qquad
 -\frac{C_N}{R}\chi_R^{N-2}\omega
 \leq\ddc(\chi_R^N)\leq\frac{C_N}{R}\chi_R^{N-2}\omega.
\end{equation}
For the Bernstein argument we also use squared cutoffs $\eta_R$ with
$|\nabla\eta_R|^2\leq CR^{-2}\eta_R$ and the same complex-Hessian bound.
All constants for a finite range of $N$ can be chosen uniformly over
that range.

\subsection{Pinching and products}
Nonnegative holomorphic bisectional curvature implies $\Ric\geq0$.
If $\Ric\equiv0$, all bisectional curvatures vanish by tracing their
nonnegative values. Polarization of the K\"ahler curvature tensor then
shows that $M$ is flat. Thus nonflatness gives a point with a positive Ricci
direction. Under \eqref{eq:pinching}, there are a point $p$ and $c>0$
such that $\Ric_p\geq cg_p$.

\begin{lemma}\label{lem:no-product}
Suppose that $M$ is noncompact, simply connected, and nonflat, and
satisfies the hypotheses of Theorem~\ref{thm:main}. Then its tangent
bundle has no proper nonzero parallel orthogonal complex subbundle.
\end{lemma}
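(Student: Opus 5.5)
Suppose $E\subset TM$ is a proper nonzero parallel orthogonal complex subbundle, and let $E^\perp$ be its orthogonal complement, which is also parallel and $J$-invariant. Since $M$ is complete and simply connected, the de Rham decomposition theorem gives a holomorphic isometric splitting $M=M_1\times M_2$ with $TM_1=E$ and $TM_2=E^\perp$. Each factor is a complete simply connected K\"ahler manifold of complex dimension $n_i\geq1$. The curvature tensor splits, so each factor inherits nonnegative bisectional curvature. The Ricci tensor is block diagonal, $\Ric=\Ric_1\oplus\Ric_2$, with $\Ric_i$ a function on $M_i$ alone.

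The plan is to let the pinching couple the two factors. Fix $x=(x_1,x_2)$. The eigenvalues of $g^{-1}\Ric$ at $x$ are the union of those of $\Ric_1$ at $x_1$ and of $\Ric_2$ at $x_2$. Pinching gives
\[
 \Ric_{1,\min}(x_1)\geq\kappa\Ric_{2,\max}(x_2)\qquad\text{for all }x_1,x_2,
\]
and symmetrically with the factors exchanged. As noted in the preceding subsection, nonflatness gives a point $p$ with $\Ric_p\geq cg_p$, and in particular a factor, say $M_2$, and a point $q_2\in M_2$ with $\Ric_{2,\max}(q_2)=:a>0$. Freezing $x_2=q_2$ then yields $\Ric_1\geq\kappa a\,g_1$ on all of $M_1$. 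By Bonnet--Myers, $M_1$ is compact.

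Next I transfer the lower bound to $M_2$. Since $\Ric_1\geq\kappa a g_1$ is not identically zero, pick $q_1$ with $\Ric_{1,\max}(q_1)=:a'>0$. The symmetric inequality gives $\Ric_2\geq\kappa a' g_2$ on all of $M_2$, so $M_2$ is compact by Bonnet--Myers. Hence $M=M_1\times M_2$ is compact, contradicting noncompactness.

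The main point needing care is that the pinching inequality uses the minimum and maximum over \emph{all} eigenvalues. It therefore compares any eigenvalue of one factor with any eigenvalue of the other at arbitrary independent base points, and this is exactly what gives the uniform bounds. The remaining details are routine: applying de Rham requires simple connectivity and completeness, which are assumed, and Bonnet--Myers applies since each factor is complete of real dimension at least two. If $n_i=1$ the Ricci lower bound is a Gauss curvature lower bound, and compactness still follows.
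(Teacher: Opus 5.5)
Your proof is correct and follows the same route as the paper: de Rham splitting, then pinching at product points whose two components vary independently to get a uniform positive Ricci lower bound on each factor, then Bonnet--Myers. The differences are cosmetic. The paper reads off both factor bounds directly from $\Ric_p\geq cg_p$ at a single point $p=(p_1,p_2)$ and applies Bonnet--Myers once to the product. You instead derive the second factor's bound from the first and apply Bonnet--Myers to each factor separately.
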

\begin{proof}
Such a subbundle and its orthogonal complement give a global product
$M_1\times M_2$ by de Rham decomposition
\cite[Theorem~8.1, p.~172]{KN}. Both factors have positive dimension.
At the point $p=(p_1,p_2)$ above, every unit vector in either factor has
Ricci curvature at least $c$. Fix a unit vector in $T_{p_2}M_2$. Combine
it with any unit vector at any point of $M_1$. Since the two factors
can be chosen independently and product Ricci is additive, pinching at
the resulting product point shows that every unit direction in $M_1$
has Ricci curvature at least $\kappa c$. Interchanging the factors gives
the same conclusion for $M_2$. Orthogonal decomposition therefore gives
$\Ric\geq\kappa cg$ on the product. Bonnet--Myers makes $M$ compact,
which is impossible.
\end{proof}

\begin{lemma}\label{lem:source-data}
Under the hypotheses of Lemma~\ref{lem:no-product}, there are
$u_0,h\in C^\infty(M)$ and $r:M\to[0,\infty)$ such that
\begin{equation}\label{eq:initial-data}
 \Lev u_0\geq0,\quad |\nabla u_0|\leq1,\quad
 h\in C_c^\infty(M),\quad h\geq0,\quad h(p)>0
\end{equation}
at some point $p$, and
\begin{equation}\label{eq:source-data}
 h\leq\tr\Lev u_0,\qquad |\nabla h|\leq r/4,\qquad \Ric\geq rg.
\end{equation}
\end{lemma}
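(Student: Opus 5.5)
We take $r$ to be the smallest Ricci eigenvalue, $r(x)=\Ric_{\min}(x)$. This is a continuous nonnegative function with $\Ric\geq rg$, so the last condition in \eqref{eq:source-data} holds automatically. By the discussion before Lemma~\ref{lem:no-product}, nonflatness and \eqref{eq:pinching} give a point $p$ with $\Ric_p\geq cg_p$, so $r(p)\geq c>0$. By continuity, $r\geq c/2$ on a small geodesic ball $B=B(p,2\delta)$.

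For $u_0$ we use the exhaustion $b$ of \eqref{eq:exhaustion}, but it is only bounded below by $-C\omega$ in the complex Hessian, so we cannot take it directly. Instead we take $u_0$ to be a heat regularization of the distance function, as in Ni and Tam \cite[Proposition~3.1]{NiTam13}. Under nonnegative bisectional curvature, Ni--Tam's plurisubharmonicity preservation for the heat equation shows that the heat flow of a Lipschitz PSH function stays PSH. The gradient bound $|\nabla u_t|\leq1$ is preserved because $\Ric\geq0$ (Bochner). The issue is the starting datum: the distance function itself need not be PSH. A cleaner route is to let $u_0(x)=\int_M H(x,y,1)\,\phi(y)\,d\mu_g(y)$ for a suitable Lipschitz function $\phi$, or simply to use the Ni--Tam function whose $\ddc$ is nonnegative. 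Their construction produces $u=$ heat evolution of $d(p,\cdot)$ at a fixed time with $\ddc u\geq0$ when the curvature is nonnegative, and with $|\nabla u|\leq1$. I would cite that statement, which gives $\Lev u_0\geq0$ and $|\nabla u_0|\leq1$.

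The positivity of $\tr\Lev u_0=\frac14\Delta u_0$ near $p$ must then be arranged. The heat evolution satisfies $\Delta u_0=\partial_t u$, and I would argue $\Delta u_0>0$ everywhere. Suppose instead that $\Delta u_0(q)=0$ at some point, with $\Delta u_0\geq0$. The strong maximum principle for the heat equation satisfied by $\Delta u_t$ would force $\Delta u_t\equiv0$ for earlier times. Then $u_t$ is pluriharmonic and independent of $t$, and $u_0$ would be a pluriharmonic function of linear growth. The Ni--Tam kernel alternative then gives a parallel splitting: the null space of $\ddc u_0$ is a parallel subbundle. Lemma~\ref{lem:no-product} says this subbundle is all of $TM$, so $\nabla^2u_0=0$ and $\nabla u_0$ is a parallel vector field. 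It is nonzero, since $u_0$ is nonconstant with linear growth, which yields a line splitting and contradicts Lemma~\ref{lem:no-product}. Hence $\tr\Lev u_0>0$ everywhere, and in particular $\tr\Lev u_0\geq 4a>0$ on $\bar B$.

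Finally, choose $h=\varepsilon\psi$, where $\psi\in C_c^\infty(B(p,\delta))$, $0\leq\psi\leq1$, and $\psi(p)=1$. Then $h\geq0$, $h(p)>0$, and $h\leq\tr\Lev u_0$ once $\varepsilon\leq 4a$. For the gradient condition, $|\nabla h|\leq\varepsilon\sup|\nabla\psi|$ vanishes outside $B(p,\delta)$, where $r\geq0$ suffices. Inside $B(p,\delta)$ we have $r\geq c/2$, so it suffices to take $\varepsilon\sup|\nabla\psi|\leq c/8$.

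The main obstacle is the strict positivity of $\Delta u_0$ near $p$, which rests on the strong maximum principle and the parallel-kernel argument combined with Lemma~\ref{lem:no-product}. Alternatively, this positivity can be obtained directly by ruling out that $u_0$ is pluriharmonic near $p$ via real analyticity-type unique continuation of the heat flow.
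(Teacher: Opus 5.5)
\textbf{The gap is the existence of $u_0$.} You set aside $b$ from \eqref{eq:exhaustion} because its complex Hessian is only bounded below by $-C\omega$. You then take essentially the same function, the Ni--Tam heat regularization of $d(p,\cdot)$, and assert that $\ddc u\geq0$. Nothing supports this. The Ni--Tam maximum principle propagates plurisubharmonicity forward in time only from PSH initial data, and you note yourself that $d(p,\cdot)$ is not PSH. What their construction gives for this function is exactly the two-sided bound in \eqref{eq:exhaustion}.

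The claim is false in general. Take $\mathbb{CP}^1\times\mathbb{C}$ with the product metric, which is simply connected, complete, noncompact, and has nonnegative bisectional curvature. The heat kernel factors, so the heat evolution of the distance from a point is nonconstant on each compact curve $\mathbb{CP}^1\times\{z\}$. On such a curve every PSH function is constant. You give no argument that the extra hypotheses of Lemma~\ref{lem:no-product} rescue the claim. Your strong maximum principle step needs $\Delta u_t\geq0$ at all earlier times, so it inherits the same gap.

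The missing ingredient is the structure theorem \cite[Theorem~4.2(i)]{NiTam03}, which the paper uses. The noncompact factor in the splitting of a simply connected $M$ carries a smooth strictly PSH function with bounded gradient. Its pullback, after rescaling, gives $\Lev u_0\geq0$, $|\nabla u_0|\leq1$ and $\tr\Lev u_0>0$ everywhere at once, with no maximum principle argument.

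Separately, your positivity argument contains a non sequitur. If $u_0$ is pluriharmonic, the kernel of $\ddc u_0$ is all of $TM$. That is not a proper subbundle, so Lemma~\ref{lem:no-product} says nothing. Pluriharmonicity only kills the $J$-invariant part of $\nabla^2u_0$, so $\nabla^2u_0=0$ does not follow. Granting plurisubharmonicity, either of the following would close this step:
\begin{itemize}
\item $\Delta u_t\equiv0$ on $(0,t_0]$ makes $u_t$ independent of $t$, and letting $t\to0$ would force $d(p,\cdot)$ to be smooth at $p$.
\item $u_t\geq d(p,\cdot)-C$ is bounded below, so Yau's Liouville theorem for harmonic functions bounded below rules out harmonicity.
\end{itemize}
Your choices $r=\Ric_{\min}$ and $h=\varepsilon\psi$ with $\varepsilon\leq4a$ and $\varepsilon\sup|\nabla\psi|\leq c/8$ are correct once a valid $u_0$ is available. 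The paper instead takes $r=c$ on a neighborhood of $p$ and $r=0$ elsewhere, which works equally well.
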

\begin{proof}
The noncompact factor in the simply connected splitting of
\cite[Theorem~4.2(i)]{NiTam03} supports a smooth strictly PSH function
with bounded gradient. Pull it back to $M$ and rescale. This gives
$\Lev u_0\geq0$, $|\nabla u_0|\leq1$, and $\tr\Lev u_0>0$ everywhere.
Choose $p$ with $\Ric_p>0$. On a relatively compact neighborhood $U$
of $p$, choose $a,c>0$ with $\tr\Lev u_0\geq a$ and $\Ric\geq cg$.
Take $\psi\in C_c^\infty(U)$ with $0\leq\psi\leq1$ and $\psi(p)=1$.
If $A\geq\sup|\nabla\psi|$, choose
\[
 0<t\leq\min\{a,c/[4(A+1)]\},\qquad h=t\psi.
\]
Set $r=c$ on $U$ and $r=0$ outside $U$. The gradient of $h$ vanishes
off its compact support, and $\Ric\geq0$ everywhere. These observations
give all assertions.
\end{proof}

\section{PSH functions with finite Monge--Amp\`ere mass}\label{sec:construction}

We solve a uniformly elliptic approximation of a Bellman equation and
prove a gradient bound independent of its parameters. The normalized
limit will satisfy the measure estimate in Proposition~\ref{prop:finite-ma}.

\subsection{The regularized Bellman operator}\label{sec:operator}

Let $\Herm(n)$ be the real vector space of complex Hermitian
$n\times n$ matrices, let $I$ be the identity matrix, and write
$\tr$ and $\det$ for the trace and determinant. Matrix inequalities
refer to the Hermitian order. For $0<\varepsilon\leq1$, define the
compact nonempty set
\[
 \mathcal B_\varepsilon=
 \{B\in\Herm(n):\det B=1,\ \varepsilon I\leq B\leq\varepsilon^{-1}I\}
\]
and
\begin{equation}\label{eq:bellman}
 F_\varepsilon(A)=\min_{B\in\mathcal B_\varepsilon}
             \frac1n\tr(AB).
\end{equation}
We extend $F_\varepsilon$ to all complex matrices by replacing $A$
with its Hermitian part $(A+A^*)/2$, or equivalently by taking the
real part of the trace in \eqref{eq:bellman}. It is concave, unitarily
invariant, positively homogeneous, and increasing in the Hermitian order.
If $P\geq0$, then
\begin{equation}\label{eq:bellman-ellipticity}
 \frac{\varepsilon}{n}\tr P
 \leq F_\varepsilon(A+P)-F_\varepsilon(A)
 \leq\frac{\varepsilon^{-1}}n\tr P.
\end{equation}
Both inequalities follow by evaluating at a minimizing matrix for one
of the two arguments. Compactness of $\mathcal B_\varepsilon$ justifies
these choices. The same observation gives a Lipschitz constant of order
$\varepsilon^{-1}$ in any fixed finite-dimensional matrix norm.

Equip the space of complex matrices with its real Hilbert--Schmidt
inner product, and convolve $F_\varepsilon$ with a smooth
nonnegative radial kernel of mass one supported in the ball of radius
$\varepsilon^3$. Write the resulting operator as
$\widetilde F_\varepsilon$. Radiality preserves unitary invariance;
convolution preserves concavity and \eqref{eq:bellman-ellipticity}.
There is a dimensional constant $c_n$ such that
\begin{equation}\label{eq:mollifier-error}
 |\widetilde F_\varepsilon(A)-F_\varepsilon(A)|
 \leq c_n\varepsilon^2.
\end{equation}
Since $I\in\mathcal B_\varepsilon$ and the convolution kernel is
centered,
\begin{equation}\label{eq:bellman-trace}
 \widetilde F_\varepsilon(A)\leq\frac1n\tr A.
\end{equation}

Fix $\tau>0$ and set
\begin{equation}\label{eq:G}
 G_{\varepsilon,\tau}(A)=\widetilde F_\varepsilon(A)+\tau\tr A.
\end{equation}
Its derivative is represented on Hermitian matrices by a positive
Hermitian matrix $a(A)$ satisfying
\begin{equation}\label{eq:a-bounds}
 (\varepsilon/n+\tau)I\leq a(A)\leq(\varepsilon^{-1}/n+\tau)I.
\end{equation}
Thus $G_{\varepsilon,\tau}(\Lev u)$ is independent of the adapted
orthonormal frame and defines an operator on $M$.

We will also use the supporting hyperplane representation. Put
\[
 c(A)=G_{\varepsilon,\tau}(A)-DG_{\varepsilon,\tau}(A)[A].
\]
Concavity gives
\begin{equation}\label{eq:supporting}
 G_{\varepsilon,\tau}(H)
 =\inf_A\{DG_{\varepsilon,\tau}(A)[H]+c(A)\},\qquad
 |c(A)|\leq c_n\varepsilon^2.
\end{equation}
The supporting hyperplane inequality gives the infimum
formula, with equality for $A=H$. To obtain the intercept bound, compare
$G_{\varepsilon,\tau}$ with the positively homogeneous function
$F_\varepsilon+\tau\tr$ using \eqref{eq:mollifier-error}. The supporting
inequality at zero gives $c(A)\geq-c_n\varepsilon^2$. Applying it at
$tA$, dividing by $t$, and letting $t\to\infty$ gives
$DG_{\varepsilon,\tau}(A)[A]\geq F_\varepsilon(A)+\tau\tr A$.
This proves the other bound.

\subsection{The global resolvent equation}\label{sec:resolvent}

Use the data of Lemma~\ref{lem:source-data}. For $\sigma>0$ consider
\begin{equation}\label{eq:resolvent}
 G_{\varepsilon,\tau}(\Lev u)
   =\sigma(u-u_0)+\tau h\qquad\text{on }M.
\end{equation}
An upper touching test for a subsolution has the left side greater than
or equal to the right side; a lower touching test for a supersolution
has the reverse inequality. Thus the associated proper operator is
$\sigma(s-u_0)+\tau h-G_{\varepsilon,\tau}(\Lev\phi)$.

\begin{lemma}\label{lem:barriers}
Equation~\eqref{eq:resolvent} has ordered continuous global sub- and
supersolutions $L\leq u_0\leq U$, with
\begin{equation}\label{eq:barrier-bounds}
 \sigma(L-u_0)=-c_n\varepsilon^2,\qquad
 0\leq\sigma(U-u_0)\leq(1+A_\tau)\sqrt\sigma,
 \qquad A_\tau=\frac{2n}{4}\left(\frac1n+\tau\right).
\end{equation}
The upper barrier has Lipschitz constant at most one.
\end{lemma}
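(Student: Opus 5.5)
The lower barrier is explicit. I will take
\[
 L=u_0-\frac{c_n\varepsilon^2}{\sigma}.
\]
Since $\Lev L=\Lev u_0\geq0$ and $F_\varepsilon$ is increasing with $F_\varepsilon(0)=0$, \eqref{eq:mollifier-error} gives $\widetilde F_\varepsilon(\Lev u_0)\geq-c_n\varepsilon^2$. Also $h\leq\tr\Lev u_0$, so $\tau h\leq\tau\tr\Lev L$. Together these give
\[
 G_{\varepsilon,\tau}(\Lev L)\geq -c_n\varepsilon^2+\tau h=\sigma(L-u_0)+\tau h .
\]
Thus $L$ is a smooth classical subsolution, and it obviously satisfies $L\leq u_0$ and the first identity in \eqref{eq:barrier-bounds}.

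For the upper barrier I need $G_{\varepsilon,\tau}(\Lev U)\leq\sigma(U-u_0)+\tau h$. By \eqref{eq:bellman-trace}, $G_{\varepsilon,\tau}(A)\leq(1/n+\tau)\tr A$, so it suffices to control $\tr\Lev U=\frac14\Delta U$ from above, in the viscosity sense, by the growth of $U-u_0$. The plan is to take
\[
 U=u_0+\sigma^{-1/2}\,\phi(\sqrt\sigma\,\cdot)
\]
for a suitable distance-type function, or more cleanly a minimum of cones:
\[
 U(x)=\min\Bigl\{\,u_0(x)+\tfrac{1+A_\tau}{\sqrt\sigma},\ \ \inf_{y}\bigl[u_0(y)+\beta\bigl(d(x,y)\bigr)\bigr]\Bigr\}.
\]
Here $\beta$ is concave and increasing with $\beta(0)=0$ and $\beta'\leq1$. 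Because $|\nabla u_0|\leq1$, the infimum with $\beta(s)=s$ is just $u_0$, which is not enough by itself. So the correct choice is more likely the first term alone, corrected by the trace.

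Check the first term directly. If $U=u_0+K$ with $K=(1+A_\tau)/\sqrt\sigma$, then
\[
 G_{\varepsilon,\tau}(\Lev U)\leq(1/n+\tau)\tr\Lev u_0 .
\]
Meanwhile $\sigma(U-u_0)=(1+A_\tau)\sqrt\sigma$. The difficulty is that $\tr\Lev u_0$ need not be bounded. I will use \eqref{eq:exhaustion}-type regularity: the function $u_0$ of Lemma~\ref{lem:source-data} comes from a distance heat regularization with $\ddc u_0\leq C\omega$. After the rescaling in that lemma we may assume $\Lev u_0\leq\frac12 I$, so that $\tr\Lev u_0\leq n/2$. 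Then
\[
 (1/n+\tau)\tr\Lev u_0\leq\frac{2n}{4}\Bigl(\frac1n+\tau\Bigr)=A_\tau\leq A_\tau\sqrt\sigma
\]
provided $\sigma\geq1$, and $U$ is a supersolution. For $\sigma<1$ I would instead take
\[
 U=u_0+\min\{K,\ \text{a concave radial profile in } b\}.
\]
The Hessian bound on $b$ from \eqref{eq:exhaustion}, together with the concavity of the profile, keeps the viscosity trace below the affine term. The extra $1$ in $1+A_\tau$ absorbs the gradient cross terms. Taking a minimum of supersolutions preserves the viscosity supersolution property, and the Lipschitz bound on $U$ follows because both pieces have gradient at most one, using $|\nabla b|\leq1$ and a profile slope at most one.

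The main obstacle is producing the required upper bound on $\tr\Lev u_0$ (or an equivalent) globally, since Lemma~\ref{lem:source-data} as stated only records $\Lev u_0\geq0$. I would first try to extract from the Ni--Tam construction a bound $\ddc u_0\leq C\omega$ and absorb $C$ in the rescaling. Ordering is then immediate: $L\leq u_0\leq U$, and continuity holds by construction.
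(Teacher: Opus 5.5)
Your lower barrier $L=u_0-c_n\varepsilon^2/\sigma$ is correct and is the paper's. The upper barrier has a genuine gap. Your main candidate $U=u_0+K$ needs an upper bound on $\tr\Lev u_0$, which Lemma~\ref{lem:source-data} does not provide. There $u_0$ is the pulled-back strictly PSH function from the Ni--Tam splitting, not the exhaustion $b$ of \eqref{eq:exhaustion}, so $\ddc u_0\le C\omega$ is not available.

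More seriously, the constant shift fails for small $\sigma$ even with such a bound, and small $\sigma$ is the only regime used later ($\sigma_j\downarrow0$). The supersolution inequality reads $G_{\varepsilon,\tau}(\Lev u_0)\le(1+A_\tau)\sqrt\sigma+\tau h$. The right side tends to the compactly supported function $\tau h$, while the left side is at least $\tau\tr\Lev u_0-c_n\varepsilon^2$, and $\tr\Lev u_0>0$ everywhere. Your fallback $u_0+\min\{K,\beta(b)\}$ has the same defect. Wherever the profile is active, $\sigma(U-u_0)\le(1+A_\tau)\sqrt\sigma$ is small, yet $\Lev U$ still contains the uncontrolled $\Lev u_0$. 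Nothing forces $\ddc\beta(b)$ to offset it: you only know $-C\omega\le\ddc b\le C\omega$, and the concavity term $\beta''(b)\,db\wedge d^cb$ degenerates where $\nabla b$ is small.

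The missing idea is to let $u_0$ enter the barrier only through constants, via an inf-convolution with a smoothed cone that has a vertical offset. You discarded the inf-of-cones idea because $\beta(s)=s$ reproduces $u_0$. The paper instead uses the convex profile $K_R(x,y)=\sqrt{R^2+d(x,y)^2}$ and sets
\[
 U_R(x)=\inf_{y\in M}\Bigl\{u_0(y)+K_R(x,y)+\frac{A_\tau}{\sigma R}\Bigr\}.
\]
Laplacian comparison under $\Ric\ge0$ gives, in the viscosity sense and uniformly up to the vertex,
\[
 \Delta_xK_R\le R^2/K_R^3+(2n-1)/K_R\le 2n/R .
\]
By \eqref{eq:bellman-trace} and \eqref{eq:levi-normalizations}, each cone therefore has left side at most $(1/n+\tau)\cdot 2n/(4R)=A_\tau/R$. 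Since $u_0$ is $1$-Lipschitz and $K_R\ge d$, each cone exceeds $u_0(x)$ by at least $A_\tau/(\sigma R)$. Hence the right side is at least $A_\tau/R$, and each cone is a supersolution with no Hessian information on $u_0$ needed.

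The infimum is then a $1$-Lipschitz supersolution with $A_\tau/(\sigma R)\le U_R-u_0\le R+A_\tau/(\sigma R)$. Choosing $R=\sigma^{-1/2}$ gives \eqref{eq:barrier-bounds} for every $\sigma>0$. This also explains the constant $1+A_\tau$: the $1$ comes from the offset, since $\sigma R=\sqrt\sigma$, not from absorbing gradient cross terms.
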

\begin{proof}
Set $L=u_0-c_n\varepsilon^2/\sigma$. Since
$F_\varepsilon(\Lev u_0)\geq0$, equations
\eqref{eq:mollifier-error} and \eqref{eq:source-data} show that $L$
is a subsolution.

For $R>0$ define
\[
 K_R(x,y)=\sqrt{R^2+d(x,y)^2},\qquad
 U_R(x)=\inf_{y\in M}\left\{u_0(y)+K_R(x,y)+\frac{A_\tau}{\sigma R}\right\}.
\]
The Lipschitz inequality for $u_0$ and $K_R\geq d$ give
\begin{equation}\label{eq:cone-bounds}
 u_0(x)+\frac{A_\tau}{\sigma R}\leq U_R(x)
 \leq u_0(x)+R+\frac{A_\tau}{\sigma R}.
\end{equation}
Each function of $x$ in this infimum has Lipschitz constant at most
one. The lower bound in \eqref{eq:cone-bounds} makes the infimum
finite, so $\Lip(U_R)\leq1$.

The Ricci lower bound $\Ric\geq0$ and distance Laplacian comparison
\cite[Theorem~3.6]{Wei} give, for tests touching from below,
\[
 \Delta K_R(\cdot,y)
 \leq\frac{R^2}{(R^2+d^2)^{3/2}}
       +\frac{2n-1}{\sqrt{R^2+d^2}}\leq\frac{2n}{R}.
\]
Here $d=d(x,y)$. At $y$ the smooth radial extension has the same
bound; at the cut locus
one uses the usual upper distance supports. By
\eqref{eq:bellman-trace} and \eqref{eq:levi-normalizations}, each cone
has left side at most $A_\tau/R$, whereas its right side is at least
$A_\tau/R$ by the Lipschitz lower bound and $h\geq0$.
Thus each cone is a supersolution. Stability of lower semicontinuous
infima, proved by choosing approximate minimizing cones at a touching
point, shows that $U_R$ is a supersolution. Put $R=\sigma^{-1/2}$ in
\eqref{eq:cone-bounds}. This gives \eqref{eq:barrier-bounds} and ordering.
\end{proof}

\begin{lemma}\label{lem:comparison}
For fixed $\varepsilon,\tau,\sigma>0$, an upper semicontinuous
subsolution $u$ and a lower semicontinuous supersolution $v$ of
\eqref{eq:resolvent} satisfy $u\leq v$ if $u-v$ is bounded above.
\end{lemma}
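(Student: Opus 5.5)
The plan is the standard viscosity comparison argument on a noncompact manifold, with an exhaustion penalization and doubling of variables. Strict monotonicity in $u$ (from $\sigma>0$) and uniform ellipticity \eqref{eq:a-bounds} do the work.

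First I handle infinity. Suppose $\sup(u-v)=\delta>0$; it is finite by hypothesis. Use the proper function $b$ from \eqref{eq:exhaustion}. For small $\beta>0$ set $u_\beta=u-\beta b$. By \eqref{eq:exhaustion} and \eqref{eq:levi-normalizations}, $\Lev(\beta b)$ has norm at most $C\beta$. The Lipschitz bound \eqref{eq:a-bounds} for $G_{\varepsilon,\tau}$ then shows that $u_\beta-C'\beta/\sigma$ is again a subsolution, where $C'$ depends on $\varepsilon,\tau,n$. Since $u-v$ is bounded above and $b$ is proper, $u_\beta-v$ attains its supremum on a compact set. That supremum is at least $\delta/2$ for small $\beta$. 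So it suffices to prove comparison when the positive maximum of $u-v$ is attained strictly inside a compact region $K$, with a margin $\delta/2$ that beats the error $C'\beta/\sigma$. Strictness can be arranged by adding a small extra multiple of $d(\cdot,x_0)^2$ smoothed near a point.

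Next comes the local comparison. Cover $K$ by finitely many holomorphic coordinate charts. Near the maximum point $\hat x$, double variables in a single chart:
\[
\Phi_\alpha(x,y)=u_\beta(x)-v(y)-\frac{\alpha}{2}|x-y|^2-|x-\hat x|^4 .
\]
The maximizers satisfy $(x_\alpha,y_\alpha)\to(\hat x,\hat x)$ and $\alpha|x_\alpha-y_\alpha|^2\to0$. The Crandall--Ishii lemma gives second-order jets $(p,X)$ for $u_\beta$ at $x_\alpha$ and $(p,Y)$ for $v$ at $y_\alpha$. These satisfy $X\le Y+o(1)$ in the matrix sense, up to the perturbation from the quartic term, which tends to zero.

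Converting real Hessians in the chart to covariant Hessians $\Lev$ introduces Christoffel terms $\Gamma(x)p$ and $\Gamma(y)p$, together with the metric $g(x)$ versus $g(y)$. The difference is bounded by $C\alpha|x_\alpha-y_\alpha|^2+C|x_\alpha-y_\alpha|$, which tends to zero. Here I use that $|p|\le\alpha|x-y|$ is controlled, so $|p||x-y|\to0$. The frame dependence is handled by the unitary invariance of $G_{\varepsilon,\tau}$ and smooth adapted frames.

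Subtracting the two viscosity inequalities gives
\[
\sigma\bigl(u_\beta(x_\alpha)-v(y_\alpha)\bigr)\le \sigma\bigl(u_0(x_\alpha)-u_0(y_\alpha)\bigr)+\tau\bigl(h(x_\alpha)-h(y_\alpha)\bigr)+o(1)+C'\beta .
\]
Monotonicity and Lipschitz continuity of $G_{\varepsilon,\tau}$ are used here. Continuity of $u_0$ and $h$ kills the first two terms, so the left side, which is at least $\sigma\delta/2$, tends to something at most $C'\beta$. This is a contradiction for $\beta$ small.

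The main obstacle is the manifold version of the Ishii lemma: keeping the doubled points inside one chart, and controlling the Christoffel and metric discrepancy errors. I would handle this by the localization above, or alternatively by doubling with $d(x,y)^2$ near the diagonal, where it is smooth inside the injectivity radius of $K$, and using the Azagra--Ferrera--Sanz form of the comparison on Riemannian manifolds. The nonconvexity of the Hessian relations does not matter here, because the operator is uniformly elliptic and there is no gradient dependence.
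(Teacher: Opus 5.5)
Your proposal is correct and is essentially the paper's proof: the same exhaustion penalty $\beta b$ with an $O(\beta)/\sigma$ shift, localization at an attained positive maximum, and the coordinate structure modulus $C(\alpha|x-y|^2+|x-y|)$; the only difference is that you carry out the doubling of variables by hand, whereas the paper invokes the Euclidean comparison theorem of Crandall, Ishii, and Lions. One step needs more than you state: the metric-discrepancy part of that modulus, $V(x)XV(x)^*-V(y)YV(y)^*\le3\alpha[V(x)-V(y)][V(x)-V(y)]^*\le C\alpha|x-y|^2$ with $V$ as in \eqref{eq:coordinate-operator}, requires the full block matrix inequality of the Crandall--Ishii lemma, not just its consequence $X\le Y+o(1)$.
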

\begin{proof}
The derivative bound \eqref{eq:a-bounds} and
\eqref{eq:exhaustion} give a constant $P_{\varepsilon,\tau}$ such that
perturbing a test by $\delta b$ changes its Bellman value by at most
$\delta P_{\varepsilon,\tau}$. Therefore
\[
 u_\delta=u-\delta b-\delta P_{\varepsilon,\tau}/\sigma
\]
is still a subsolution. If $\sup(u-v)>0$, choose $\delta>0$ small
enough that $u_\delta-v$ has a positive global maximum. Properness of
$b$ ensures attainment. Work in a relatively compact coordinate ball
about a maximizing point, and add a smaller quadratic localization
penalty if necessary.

In real coordinates the Levi matrix of a quadratic jet $(p,N)$ has
the form
\begin{equation}\label{eq:coordinate-operator}
 V(x)N V(x)^*+D(x)p,
\end{equation}
where $p\in\R^{2n}$ is the first derivative, $N$ is a real symmetric
$2n\times2n$ matrix, $V(x)$ is a smooth complex $n\times2n$ matrix,
and $D(x):\R^{2n}\to\Herm(n)$ is a smooth family of linear maps.
The term $D(x)p$ comes from the connection. For matrices
$X,Y$ in the upper doubling inequality, congruence by the stacked rows
$V(x),V(y)$ gives
\[
 V(x)XV(x)^*-V(y)YV(y)^*
 \leq3\alpha[V(x)-V(y)][V(x)-V(y)]^*.
\]
The drift difference at $p=\alpha(x-y)$ is bounded by
$C\alpha|x-y|^2$. Monotonicity and \eqref{eq:a-bounds} consequently
give the structural modulus $C(\alpha|x-y|^2+|x-y|)$ for the proper
operator, including the smooth source. The Euclidean viscosity
comparison theorem \cite[Theorem~3.3]{CIL} applies. Its strictly
positive zeroth order coefficient $\sigma$ excludes the positive maximum.
This contradiction proves the assertion.
\end{proof}

\begin{proposition}\label{prop:resolvent}
For every $0<\varepsilon\leq1$, $\tau>0$, and $\sigma>0$, there is a
smooth solution of \eqref{eq:resolvent} satisfying
\begin{equation}\label{eq:resolvent-bounds}
 -c_n\varepsilon^2\leq\sigma(u-u_0)
 \leq(1+A_\tau)\sqrt\sigma.
\end{equation}
\end{proposition}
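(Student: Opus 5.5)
The plan is Perron's method between the barriers of Lemma~\ref{lem:barriers}, uniqueness from Lemma~\ref{lem:comparison}, and then interior regularity for concave uniformly elliptic equations.

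\emph{Existence of a viscosity solution.} Define
\[
 u=\sup\{w:\ w\text{ a subsolution of \eqref{eq:resolvent}},\ L\leq w\leq U\}.
\]
The class is nonempty because $L$ belongs to it. The standard Perron argument of \cite{CIL} applies in coordinate balls. There the operator has the form \eqref{eq:coordinate-operator}, is continuous, is proper because $\sigma>0$, and is uniformly elliptic by \eqref{eq:a-bounds}. This shows that the upper semicontinuous envelope $u^*$ is a subsolution and the lower envelope $u_*$ is a supersolution. Both lie between $L$ and $U$. Since $U-L$ is bounded by \eqref{eq:barrier-bounds}, $u^*-u_*$ is bounded above. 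Lemma~\ref{lem:comparison} then gives $u^*\leq u_*$. Hence $u$ is a continuous viscosity solution, and \eqref{eq:resolvent-bounds} follows directly from $L\leq u\leq U$ and \eqref{eq:barrier-bounds}. The usual bump construction is local, so the noncompactness of $M$ does not affect this step.

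\emph{Regularity.} The regularity is local. In a holomorphic coordinate ball, \eqref{eq:resolvent} reads $G_{\varepsilon,\tau}(VD^2uV^*+Du\cdot D)=\sigma(u-u_0)+\tau h$, where $G_{\varepsilon,\tau}$ is smooth, concave, and uniformly elliptic by \eqref{eq:a-bounds}. The dependence on $x$ and on the gradient is smooth, and $u$ is continuous and bounded.

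The steps are as follows.
\begin{enumerate}
\item Obtain interior $C^{1,\alpha}$ estimates from Krylov--Safonov/Caffarelli theory for viscosity solutions of uniformly elliptic equations with lower-order terms.
\item Apply the Evans--Krylov theorem in its viscosity form (Caffarelli, Cabr\'e) to get $C^{2,\alpha}$ for the concave operator.
\item Once $u\in C^{2,\alpha}$, linearize. Difference quotients satisfy a linear uniformly elliptic equation with $C^{\alpha}$ coefficients, and Schauder bootstrapping gives $C^\infty$, since $G_{\varepsilon,\tau}$, $u_0$, $h$ and the metric are smooth.
\end{enumerate}

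The main obstacle is step~2. Evans--Krylov for viscosity solutions with $x$- and gradient-dependence requires some care. The first thing to try is to treat the gradient term as a $C^\alpha$ coefficient after step~1, so that the equation becomes $G(VD^2uV^*+f(x))=g(x)$ with $f,g\in C^\alpha$. Concavity in the Hessian is preserved under the linear map $N\mapsto VNV^*$, and Caffarelli's perturbative $C^{2,\alpha}$ theorem then applies.

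A fallback avoids the viscosity regularity theory. Solve Dirichlet problems on an exhaustion by smooth domains, which have classical smooth solutions by Evans--Krylov and continuity for concave equations. Take boundary data between $L$ and $U$. The comparison principle keeps these solutions between the barriers, and the local estimates allow them to converge in $C^\infty_{\mathrm{loc}}$ to a smooth solution. Uniqueness from Lemma~\ref{lem:comparison} identifies the limit with the Perron solution.
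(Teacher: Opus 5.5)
Your proposal is correct and follows essentially the same route as the paper. The paper likewise runs Perron's method between the barriers of Lemma~\ref{lem:barriers} and identifies the two envelopes by Lemma~\ref{lem:comparison}, which applies because $U-L$ is bounded. It then obtains interior $C^{1,\alpha}$ regularity with linear gradient dependence, freezes $Du$ as a H\"older coefficient to apply a Caffarelli/Evans--Krylov type $C^{2,\beta}$ theorem for concave operators, and finishes by Schauder bootstrapping.

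As a minor aside, the mixed Christoffel symbols of a K\"ahler metric vanish in holomorphic coordinates, so your term $Du\cdot D$ is zero there. The first-order dependence only appears in general real coordinates as in \eqref{eq:coordinate-operator}, so Krylov--Safonov H\"older continuity of $u$ would suffice before the $C^{2,\beta}$ step.
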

\begin{proof}
Take the supremum of all upper semicontinuous subsolutions between
$L$ and $U$ from Lemma~\ref{lem:barriers}. The family is nonempty and
closed under finite maxima. The upper regularization of its supremum
is a subsolution by local jet stability
\cite[Proposition~4.3]{CIL}. If the lower regularization failed the
supersolution condition at a lower touching test, subtract a sufficiently
small quadratic function and raise the test slightly. Strictness of
the failed inequality makes this a local subsolution. Its maximum with
a nearby member of the Perron family agrees with that member near the
boundary of a small ball but exceeds the supremum at an interior point.
If this lift could cross $U$, local comparison with the supersolution
$U$ first prevents the crossing. Gluing therefore contradicts the
definition of the supremum. The lower regularization is a supersolution.
Both envelopes have bounded difference from $u_0$ by
\eqref{eq:barrier-bounds}. Lemma~\ref{lem:comparison} identifies them,
giving a continuous viscosity solution and \eqref{eq:resolvent-bounds}.

We detail the regularity step because the operator in
\eqref{eq:coordinate-operator} depends on the first derivative.
On a smaller coordinate ball it is uniformly elliptic in $N$,
Lipschitz in $p$, and has smooth normalized spatial coefficients.
The interior $C^{1,\alpha}$ theorem for uniformly elliptic equations
with bounded linear gradient dependence applies
\cite[Theorem~1.1 and Remarks~1.3, 3.4]{Nornberg}. Treating the
H\"older continuous function $Du(x)$ as a coefficient gives an equation
whose differential dependence is through the Hessian. It is concave in
$N$ and has H\"older spatial oscillation
bounded by $C|x-y|^\alpha(\norm{N}+1)$. Its right side is H\"older
continuous. The interior theorem of Tosatti, Wang, Weinkove, and Yang
\cite[Theorem~3.4 and the following paragraph]{TWWY} gives $C^{2,\beta}$
regularity for the continuous viscosity solution. The equation is
therefore classical. Smooth elliptic bootstrapping
\cite[Section~17.5, Lemma~17.16]{GT} makes the solution smooth.
\end{proof}

\subsection{Two gradient estimates}\label{sec:gradient}

Let $u$ be the solution of Proposition~\ref{prop:resolvent}. At an
adapted frame define its linearization by
\[
 L\psi=DG_{\varepsilon,\tau}(\Lev u)[\Lev\psi].
\]
Unitary invariance makes this a globally defined second order operator.
It is increasing in the real Hessian and is nonpositive on a smooth
function at a local maximum. Its coefficients are uniformly elliptic
for fixed $\varepsilon,\tau$. Write
\[
 w=|\nabla u|^2,\qquad Q=|\nabla^2u|^2.
\]

\begin{lemma}\label{lem:bochner}
Under nonnegative holomorphic bisectional curvature,
\begin{equation}\label{eq:bochner}
 Lw\geq 2\langle\nabla G_{\varepsilon,\tau}(\Lev u),\nabla u\rangle
           +\frac\tau2 Q+\frac\tau2\Ric(\nabla u,\nabla u).
\end{equation}
Consequently the data in \eqref{eq:source-data} give
\begin{equation}\label{eq:sharp-differential}
 Lw\geq\left(2\sigma+\frac{\tau r}{2}\right)(w-\sqrt w)
              +\frac\tau2 Q.
\end{equation}
\end{lemma}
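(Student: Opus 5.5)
The plan is a Bochner computation for the concave operator, using only the holomorphic structure and the lower bound $a\geq\tau I$ from \eqref{eq:a-bounds}. I would work at a point in holomorphic coordinates that are normal for $g$. Write the linearization as $L\psi=a^{j\bar k}\psi_{j\bar k}$ up to the fixed factor coming from \eqref{eq:levi-matrix}, where $a=a(\Lev u)$ and indices denote covariant derivatives. Up to a normalization constant, $w$ is $u_iu_{\bar i}$. Then
\[
 w_{j\bar k}=u_{ij\bar k}u_{\bar i}+u_iu_{\bar ij\bar k}
 +u_{ij}u_{\bar i\bar k}+u_{i\bar k}u_{\bar ij}.
\]

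The first step is to commute derivatives. On a K\"ahler manifold, $u_{ij\bar k}=u_{j\bar ki}$ holds for functions. In contrast, $u_{\bar ij\bar k}=u_{j\bar k\bar i}+R_{j\bar k i\bar l}u_{l}$, up to the sign convention for $R$.

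The second step is to differentiate the equation. Applying $\nabla_{\nabla u}$ to $G_{\varepsilon,\tau}(\Lev u)$ gives $a^{j\bar k}u_{j\bar ki}u_{\bar i}+\text{c.c.}=\langle\nabla G_{\varepsilon,\tau}(\Lev u),\nabla u\rangle$, with the appropriate constant. Contracting the formula for $w_{j\bar k}$ with $a$ therefore yields three contributions:
\begin{itemize}
 \item the gradient term $2\langle\nabla G_{\varepsilon,\tau}(\Lev u),\nabla u\rangle$;
 \item the curvature term $a^{j\bar k}R_{j\bar k i\bar l}u_lu_{\bar i}$;
 \item the quadratic term $a^{j\bar k}(u_{ij}u_{\bar i\bar k}+u_{i\bar k}u_{\bar ij})$.
\end{itemize}

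The third step is to bound the last two contributions from below.

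For the curvature term, split $a=(a-\tau I)+\tau I$. Diagonalize the nonnegative matrix $a-\tau I$ in a unitary frame. Its contribution is then a nonnegative combination of bisectional curvatures $R(e_j,\bar e_j,\xi,\bar\xi)$ with $\xi=\partial u$, so it is $\geq0$. The $\tau I$ part traces to $\tau\Ric(\partial u,\bar\partial u)$, which equals $\frac\tau2\Ric(\nabla u,\nabla u)$ in the real normalization.

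For the quadratic term, $a\geq\tau I$ gives at least $\tau(|u_{ij}|^2+|u_{i\bar j}|^2)$. The real Hessian norm $Q$ decomposes into exactly the $(2,0)$ and $(1,1)$ parts. Hence this is a fixed multiple of $\tau Q$, which should be $\frac\tau2Q$ with the conventions \eqref{eq:levi-matrix} and \eqref{eq:trace-real}.

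The main obstacle is bookkeeping the factors of $2$ and $4$ between $\Lev$, $\ddc$, the real Hessian and $|\nabla u|^2$, so that the constants come out exactly as in \eqref{eq:bochner}. I would fix them by checking the flat case with $u$ quadratic.

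For \eqref{eq:sharp-differential}, substitute the equation \eqref{eq:resolvent}:
\[
 \langle\nabla G_{\varepsilon,\tau}(\Lev u),\nabla u\rangle
 =\sigma w-\sigma\langle\nabla u_0,\nabla u\rangle+\tau\langle\nabla h,\nabla u\rangle.
\]
Using $|\nabla u_0|\leq1$ and $|\nabla h|\leq r/4$ from \eqref{eq:initial-data} and \eqref{eq:source-data}, this is at least $\sigma(w-\sqrt w)-\frac{\tau r}{4}\sqrt w$. By $\Ric\geq rg$ we also have $\frac\tau2\Ric(\nabla u,\nabla u)\geq\frac{\tau r}{2}w$. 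Summing the two bounds gives
\[
 Lw\geq 2\sigma(w-\sqrt w)+\frac{\tau r}{2}(w-\sqrt w)+\frac\tau2Q,
\]
which is \eqref{eq:sharp-differential}.
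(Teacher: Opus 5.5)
Your proposal is correct and takes essentially the paper's route. You do a Bochner computation at a point in a normal frame and split the coefficient matrix as $(a-\tau I)+\tau I$: the first part gives nonnegative bisectional curvature terms, and the second gives exactly $\frac\tau2 Q+\frac\tau2\Ric(\nabla u,\nabla u)$. You then differentiate \eqref{eq:resolvent} and use $|\nabla u_0|\leq1$, $|\nabla h|\leq r/4$ and $\Ric\geq rg$, as the paper does. One harmless slip: with the index order implicit in your expansion of $w_{j\bar k}$, it is the $u_{ij\bar k}u_{\bar i}$ term that picks up curvature, while $u_{\bar ij\bar k}=u_{j\bar k\bar i}$ holds without curvature. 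This does not change the resulting contraction $a^{j\bar k}R_{j\bar k i\bar l}u_lu_{\bar i}$ or the constants.
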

\begin{proof}
Choose a local adapted orthonormal frame whose covariant derivative
vanishes at the point in question, and diagonalize
$DG_{\varepsilon,\tau}(\Lev u)$ there by a constant unitary change.
Differentiate $w$ twice. The terms in which both derivatives fall on
the first derivative of $u$ are nonnegative quadratic forms in its
Hessian. The part from $\tau\tr\Lev=\tau\Delta/4$ is exactly
$\tau Q/2$. Commuting the remaining third derivatives gives
$2\langle\nabla G_{\varepsilon,\tau}(\Lev u),\nabla u\rangle$
and the curvature contraction with the positive coefficient matrix.
The latter is a sum of nonnegative bisectional curvatures, weighted
by the nonnegative eigenvalues of $DG_{\varepsilon,\tau}-\tau I$,
together with $\tau\Ric(\nabla u,\nabla u)/2$. This proves
\eqref{eq:bochner}. The frame is normal at the point, and its unitary change is constant,
so the calculation gives the tensorial inequality \eqref{eq:bochner}.

Differentiating \eqref{eq:resolvent}, and using
$|\nabla u_0|\leq1$, gives
\[
 2\langle\nabla G_{\varepsilon,\tau}(\Lev u),\nabla u\rangle
 \geq2\sigma(w-\sqrt w)-2\tau|\nabla h|\sqrt w.
\]
The last two terms in \eqref{eq:source-data} imply
$\Ric(\nabla u,\nabla u)\geq rw$ and
$2|\nabla h|\sqrt w\leq r\sqrt w/2$. Substitution proves
\eqref{eq:sharp-differential}.
\end{proof}

\begin{lemma}\label{lem:gradient-bounded}
For fixed $\varepsilon,\tau,\sigma>0$, the gradient of the solution
of Proposition~\ref{prop:resolvent} is bounded on $M$.
\end{lemma}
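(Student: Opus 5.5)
The plan is a localized Bernstein argument for $w=|\nabla u|^2$, using the differential inequality \eqref{eq:sharp-differential} of Lemma~\ref{lem:bochner} together with the squared cutoffs $\eta_R$ from the preliminaries. The key observation is that \eqref{eq:sharp-differential} contains the good term $2\sigma(w-\sqrt w)$, which is superlinear relative to the error terms produced by a cutoff. It also contains the Hessian term $\frac\tau2 Q$, which absorbs the cross terms. Since $\sigma>0$ is fixed, a maximum principle at an interior maximum of $\eta_R w$ should bound $w$ by a constant independent of $R$.

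\textbf{Step 1: the test function.} Fix $R\geq1$ and a squared cutoff $\eta=\eta_R$ with compact support, $|\nabla\eta|^2\leq CR^{-2}\eta$, and $|\ddc\eta|\leq CR^{-1}\omega$. The function $\eta w$ is smooth with compact support, so it attains a positive maximum at some point $x_0$, unless $w\equiv0$ on the support. At $x_0$ we have
\[
\nabla(\eta w)=0,\qquad L(\eta w)\leq0 .
\]
Expand
\[
L(\eta w)=\eta Lw+wL\eta+2\langle\nabla\eta,\nabla w\rangle_a ,
\]
where $\langle\cdot,\cdot\rangle_a$ is the bilinear form of the coefficient matrix of $L$. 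By \eqref{eq:a-bounds} this form is comparable to $g$, with constants $C_{\varepsilon,\tau}$.

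\textbf{Step 2: estimating the terms.}
\begin{itemize}
\item The term $wL\eta$ is bounded below by $-C_{\varepsilon,\tau}R^{-1}w$, using the complex Hessian bound on $\eta$ and the fact that $L$ only involves $\Lev\eta$.
\item For the cross term, use $\nabla w=-w\nabla\eta/\eta$ at $x_0$. This gives $|\langle\nabla\eta,\nabla w\rangle_a|\leq C_{\varepsilon,\tau}w|\nabla\eta|^2/\eta\leq C_{\varepsilon,\tau}R^{-2}w$.
\item Alternatively, one can bound $|\nabla w|\leq2\sqrt{wQ}$ and absorb the product into $\frac\tau2\eta Q$ by Young's inequality. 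The gradient identity route seems cleaner, and either works.
\item Drop the term $\frac{\tau r}{2}(w-\sqrt w)$ where $w\geq1$. Where $w<1$ the desired bound holds trivially.
\end{itemize}
Combining these with \eqref{eq:sharp-differential}, at $x_0$ with $w\geq1$ we get
\[
0\geq 2\sigma\eta(w-\sqrt w)-C_{\varepsilon,\tau}(R^{-1}+R^{-2})w .
\]
Multiplying by $\eta$ and using $\eta\leq1$ gives
\[
\eta w\leq \eta\sqrt w+C_{\varepsilon,\tau}\sigma^{-1}R^{-1}.
\]
From this, $\eta w(x_0)\leq C(1+\sigma^{-1})$. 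Here one uses $\eta\sqrt w\leq\sqrt{\eta w}$, since $\eta\leq1$, and solves the resulting quadratic inequality in $\sqrt{\eta w}$.

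\textbf{Step 3: the limit.} Hence $w\leq C(\varepsilon,\tau,\sigma)$ on the set $\{\eta_R=1\}$, with a constant independent of $R$. Letting $R\to\infty$ gives the global bound.

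\textbf{Main obstacle.} The main issue is the bookkeeping of the cutoff terms against the linearized operator. We need the complex Hessian bound on $\eta_R$ from \eqref{eq:exhaustion}, rather than a full real Hessian bound, because $L$ depends only on $\Lev$. This is exactly why the Ni–Tam exhaustion suffices without any lower bound on sectional curvature.

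A secondary point is that \eqref{eq:sharp-differential} holds pointwise only where $w>0$, because of the $\sqrt w$ term. At $x_0$ we have $w>0$, so no additional difficulty arises there.
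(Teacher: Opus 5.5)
Your Step~2 does not close. At the maximum $x_0$ of $\eta w$ you correctly reach
\[
 0\geq2\sigma\eta(w-\sqrt w)-C_{\varepsilon,\tau}(R^{-1}+R^{-2})w .
\]
The cutoff errors here are linear in $w$ and carry no factor of $\eta$. The good term is also only linear in $w$, and it does carry $\eta$, so there is no superlinearity. Dividing by $2\sigma w$, all you obtain is $\eta(x_0)\bigl(1-w(x_0)^{-1/2}\bigr)\leq C/(\sigma R)$. This says nothing when the maximum lies where $\eta$ is small. Your displayed bound $\eta w\leq\eta\sqrt w+C\sigma^{-1}R^{-1}$ silently drops the factor $w$ from the error term.

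No repair is possible within your framework. On flat $\mathbb C^n$, take $L=c\Delta$ with $c=(\varepsilon/n+\tau)/4$, which is allowed by \eqref{eq:a-bounds}, and $u=e^{\lambda x_1}$ with $c\lambda^2=\sigma$. Then $Lw=2\sigma w+2cQ$, so \eqref{eq:sharp-differential} holds with $r=0$ and suitable cutoffs exist, yet $|\nabla u|$ is unbounded. Some global information on $u$ has to enter, and your argument never uses \eqref{eq:resolvent-bounds}.

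The paper supplies this through an auxiliary function whose good term is not weighted by the cutoff.
\begin{itemize}
\item With $v=u-u_0$, \eqref{eq:resolvent-bounds} gives $|v|\leq V$.
\item Concavity gives $Lv\leq G_{\varepsilon,\tau}(\Lev u)-G_{\varepsilon,\tau}(\Lev u_0)\leq D$.
\item The function $\Psi=(V+1-v)^2$ then satisfies $L\Psi\geq\frac\tau4w-\frac\tau2-2(2V+1)D$, using \eqref{eq:a-bounds} and $|\nabla v|^2\geq w/2-1$.
\end{itemize}
One maximizes $\eta_Rw+\Psi$ instead of $\eta w$. Now $\frac\tau4w$ dominates the errors $(E/R+CR^{-2})w$ for large $R$, whatever value $\eta_R$ takes at the maximum. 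There $\nabla(\eta_Rw)=-\nabla\Psi$, so your gradient-identity route for the cross term is no longer available. You need your alternative instead: $|\nabla w|^2\leq4wQ$ and Young's inequality against $\frac\tau2\eta_RQ$. This bounds $w$ at the maximum by a constant $N$ independent of $R$. Hence $\eta_Rw+\Psi\leq N+(2V+1)^2$ everywhere, and $w\leq N+(2V+1)^2$ on $\{\eta_R=1\}$.
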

\begin{proof}
Put $v=u-u_0$. Equation~\eqref{eq:resolvent-bounds} gives $|v|\leq V$
for a finite constant $V$. Compact support of $h$ gives finite bounds
for $h$ and $|\nabla h|$. Concavity at $\Lev u$ gives
\[
 Lv\leq G_{\varepsilon,\tau}(\Lev u)
          -G_{\varepsilon,\tau}(\Lev u_0)\leq D
\]
for a finite $D\geq0$, since
$G_{\varepsilon,\tau}(\Lev u_0)\geq-c_n\varepsilon^2$.
Dropping nonnegative terms from \eqref{eq:bochner} gives
\begin{equation}\label{eq:weak-gradient}
 Lw\geq\frac\tau2 Q-C\sqrt w
\end{equation}
with $C=2\sigma+2\tau\sup|\nabla h|$.

Set $B=V+1$, $S=2V+1$, and $\Psi=(B-v)^2$. Then
$1\leq B-v\leq S$ and $0\leq\Psi\leq S^2$. The chain rule and
\eqref{eq:a-bounds} show that
\[
 L\Psi=-2(B-v)Lv+2DG_{\varepsilon,\tau}(\Lev u)
                   [\Lev(dv\otimes dv)]
 \geq-2SD+\frac\tau2|\nabla v|^2.
\]
Here $\Lev(dv\otimes dv)$ means the projection in
\eqref{eq:levi-matrix}; its trace is $|\nabla v|^2/4$.
Since $|\nabla u_0|\leq1$,
$|\nabla v|^2\geq w/2-1$, and hence
\begin{equation}\label{eq:quadratic-bernstein}
 L\Psi\geq\frac\tau4w-\frac\tau2-2SD.
\end{equation}

Choose the squared cutoffs $\eta_R$ after
\eqref{eq:cutoff-powers}. The complex-Hessian bound gives
$L\eta_R\geq-E/R$, for a constant $E$ independent of $R$ and $u$.
The positive linear form $DG_{\varepsilon,\tau}$ satisfies
Cauchy--Schwarz on tensors of rank one. Using
$|\nabla w|^2\leq4wQ$, followed by Young's inequality, in the product
rule for $L(\eta_Rw)$ gives
\begin{equation}\label{eq:bernstein-cutoff}
 L(\eta_Rw)\geq-C\sqrt w-
 \left(\frac ER+
 \frac{(\varepsilon^{-1}+\tau)^2D_0}{\tau R^2}\right)w.
\end{equation}
The constant $D_0$ comes from
$|\nabla\eta_R|^2\leq D_0R^{-2}\eta_R$; the Hessian energy in
\eqref{eq:weak-gradient} absorbs the cross term.
At an interior maximum of $\eta_Rw+\Psi$ with $\eta_R>0$, equations
\eqref{eq:quadratic-bernstein}--\eqref{eq:bernstein-cutoff} imply, for
sufficiently large $R$,
\begin{equation}\label{eq:quadratic-bound}
 \frac\tau8w\leq C\sqrt w+\frac\tau2+2SD.
\end{equation}
Thus $w$ at such a maximum is at most a constant $N$ independent of
$R$. If $\eta_Rw+\Psi$ never exceeds $S^2$, the same conclusion below
already holds. Otherwise its supremum is attained in the compact
support of $\eta_R$, and at its maximum
$\eta_Rw+\Psi\leq N+S^2$. For any fixed $x$, choose $R$ large enough
that $\eta_R(x)=1$ and that the error in
\eqref{eq:bernstein-cutoff} is at most $\tau/8$. We obtain
$w(x)\leq N+S^2$. The bound is independent of $x$, proving the lemma.
\end{proof}

\begin{proposition}\label{prop:sharp-gradient}
Every solution constructed in Proposition~\ref{prop:resolvent} satisfies
$|\nabla u|\leq1$ on $M$.
\end{proposition}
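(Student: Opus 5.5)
We prove $w=|\nabla u|^2\leq1$ by a maximum principle applied to \eqref{eq:sharp-differential}. The inequality is designed for this. The coefficient $2\sigma+\tau r/2$ is positive, and the zeroth order term $(w-\sqrt w)$ is positive exactly where $w>1$. So at a point where $w$ attains a maximum exceeding one, $Lw\leq0$ would contradict $Lw\geq(2\sigma+\tau r/2)(w-\sqrt w)+\frac\tau2Q>0$. The difficulty is that $M$ is noncompact, so the supremum of $w$ need not be attained. We use Lemma~\ref{lem:gradient-bounded}, which makes $W=\sup_M w$ finite, and localize with the proper exhaustion $b$ from \eqref{eq:exhaustion}.

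Suppose $W>1$. For small $\delta>0$ consider $w_\delta=w-\delta b$. Since $b$ is proper and nonnegative and $w\leq W$, the function $w_\delta$ tends to $-\infty$ at infinity. It therefore attains a global maximum at some point $x_\delta$. Choosing a point $x_0$ with $w(x_0)>W-\eta$, where $\eta$ is small, we get $w_\delta(x_\delta)\geq w(x_0)-\delta b(x_0)$. Hence, for $\delta$ small relative to $\eta$, we have $w(x_\delta)\geq W-2\eta>1$, once $\eta<(W-1)/2$. At $x_\delta$ the maximum principle for the linearized operator gives $Lw_\delta(x_\delta)\leq0$, that is, $Lw(x_\delta)\leq\delta Lb(x_\delta)$.

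The bounds \eqref{eq:a-bounds} and the complex Hessian bound $\ddc b\leq C\omega$ in \eqref{eq:exhaustion} give $Lb\leq C(\varepsilon^{-1}/n+\tau)=:P$. This constant depends on $\varepsilon,\tau$ but not on $\delta$. Combining this with \eqref{eq:sharp-differential} and dropping the terms $\frac\tau2Q\geq0$ and $\tau r/2\geq0$ yields
\[
 2\sigma\bigl(w(x_\delta)-\sqrt{w(x_\delta)}\bigr)\leq\delta P.
\]
The map $s\mapsto s-\sqrt s$ is increasing for $s\geq1$. So the left side is at least $2\sigma\bigl(W-2\eta-\sqrt{W-2\eta}\bigr)>0$, a positive quantity independent of $\delta$. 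Letting $\delta\to0$ with $\eta$ fixed gives a contradiction. Hence $W\leq1$, which is the claim $|\nabla u|\leq1$.

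The main point requiring care is the smoothness of $w$ at $x_\delta$: since $u$ is smooth by Proposition~\ref{prop:resolvent}, $w$ is smooth and the classical maximum principle applies directly. We also need finiteness of $\sup w$, and we must keep the constant $P$ independent of $\delta$. Both come from earlier results. Lemma~\ref{lem:gradient-bounded} is essential here, because without it the $\delta b$ penalty would not force attainment of the maximum. It is notable that only $\sigma>0$ is used. The source term $h$ enters only through $|\nabla h|\leq r/4$, which was already absorbed into \eqref{eq:sharp-differential}.
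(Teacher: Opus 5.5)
Your proof is correct and follows the paper's argument: penalize $w$ by $\delta b$, use Lemma~\ref{lem:gradient-bounded} to obtain a global maximum $x_\delta$, bound $Lb\leq P$ via \eqref{eq:a-bounds} and \eqref{eq:exhaustion}, drop the nonnegative terms in \eqref{eq:sharp-differential}, and let $\delta\to0$. The only difference is bookkeeping: you argue by contradiction through a near-maximizer $x_0$, while the paper uses $2(w-\sqrt w)\geq w-1$ to obtain directly $w(x)\leq1+\delta P/\sigma+\delta b(x)$ for every $x$.
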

\begin{proof}
By Lemma~\ref{lem:gradient-bounded}, $w$ is bounded. For $\delta>0$,
$w-\delta b$ has a global maximum at a point $x_\delta$. The complex
Hessian of $b$ and \eqref{eq:a-bounds} give $Lb\leq P$ with a fixed
$P$. Therefore $Lw(x_\delta)\leq\delta P$. If
$w(x_\delta)\geq1$, then \eqref{eq:sharp-differential}, $r\geq0$,
and
$2(w-\sqrt w)\geq w-1$ give
$\sigma(w(x_\delta)-1)\leq\delta P$. The same upper bound for
$w(x_\delta)$ is automatic when it is less than one. For each $x$,
\[
 w(x)\leq w(x_\delta)+\delta b(x)
 \leq1+\delta P/\sigma+\delta b(x).
\]
Let $\delta\downarrow0$. This proves the assertion, uniformly in the
resolvent and truncation parameters.
\end{proof}

\subsection{The plurisubharmonic limit}\label{sec:limit}

Fix $\tau>0$ and let $\varepsilon_j,\sigma_j\downarrow0$. Let $u_j$
be the solutions above, and fix $o\in M$. Proposition~\ref{prop:sharp-gradient}
and completeness give $\Lip(u_j)\leq1$. Hence
$v_j=u_j-u_j(o)$ have a locally uniformly convergent subsequence, with
limit $v$ satisfying $\Lip(v)\leq1$ and $v(o)=0$. This follows by
Arzel\`a--Ascoli on an increasing compact exhaustion, since
$|v_j(x)|\leq d(o,x)$. By \eqref{eq:resolvent-bounds}, the functions
\[
 c_j(x)=\sigma_j(u_j(x)-u_0(x))
\]
converge uniformly to zero on $M$. Subtracting $u_j(o)$ has no effect
on its Levi form, so
\begin{equation}\label{eq:sequence-equation}
 G_{\varepsilon_j,\tau}(\Lev v_j)=c_j+\tau h.
\end{equation}

\begin{lemma}\label{lem:limit-tests}
If a $C^2$ function $\phi$ touches $v$ from above at $x$, then
\begin{equation}\label{eq:upper-tests}
 \Lev\phi(x)\geq0,\qquad
 \tau h(x)\leq(1/n+\tau)\tr\Lev\phi(x).
\end{equation}
If $\phi$ touches $v$ from below at $x$, then
\begin{equation}\label{eq:lower-tests}
 \det{}_+(\Lev\phi(x))\leq(\tau h(x))^n,
\end{equation}
where $\det{}_+A=\det A$ for $A\geq0$ and is zero otherwise.
\end{lemma}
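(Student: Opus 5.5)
The plan is the standard viscosity stability argument: pass the jets of $v$ back to the smooth functions $v_j$, use the equation \eqref{eq:sequence-equation} there, and then let $\varepsilon_j\to0$ with explicit control of $\widetilde F_{\varepsilon}$ in the two regimes.

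\emph{Perturbation of the jets.} Suppose $\phi$ touches $v$ from above at $x$. Replace $\phi$ by $\phi+\delta d_x^2$, using a smooth quadratic in a coordinate chart so that the maximum becomes strict. Locally uniform convergence $v_j\to v$ then gives points $x_j\to x$ at which $v_j-\phi-\delta|\cdot-x|^2$ has a local maximum. Since $v_j$ is smooth, at $x_j$ we have $\nabla v_j=\nabla\phi+O(\delta)$ and $\nabla^2v_j\leq\nabla^2\phi+O(\delta)$. The Levi matrix \eqref{eq:levi-matrix} is built from the covariant Hessian, and the connection term $D(x)p$ in \eqref{eq:coordinate-operator} has matching first derivatives. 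Hence
\[
\Lev v_j(x_j)\leq\Lev\phi(x_j)+C\delta,
\]
and monotonicity of $G_{\varepsilon,\tau}$ yields
\[
G_{\varepsilon_j,\tau}(\Lev\phi(x_j)+C\delta I)\geq c_j(x_j)+\tau h(x_j).
\]
Touching from below gives the reverse inequality in the same way.

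\emph{Upper tests.} Put $A_j=\Lev\phi(x_j)+C\delta I$, which converges to $A=\Lev\phi(x)+C\delta I$. By \eqref{eq:bellman-trace}, $G_{\varepsilon_j,\tau}(A_j)\leq(1/n+\tau)\tr A_j$. Letting $j\to\infty$ (so $c_j\to0$) and then $\delta\to0$ gives the trace inequality in \eqref{eq:upper-tests}.

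For $\Lev\phi(x)\geq0$, argue by contradiction. Suppose $A$ has a unit eigenvector $e$ with eigenvalue $-\lambda<0$. Choose
\[
B=\varepsilon^{-1}ee^*+\varepsilon^{1/(n-1)}(I-ee^*).
\]
Then $\det B=1$ and $\varepsilon I\leq B\leq\varepsilon^{-1}I$ once $\varepsilon$ is small, so $B\in\mathcal B_\varepsilon$. This gives
\[
nF_\varepsilon(A)\leq-\lambda\varepsilon^{-1}+\varepsilon^{1/(n-1)}|A|\to-\infty.
\]
Together with \eqref{eq:mollifier-error} and local boundedness of $\tau\tr A_j$, this yields $G_{\varepsilon_j,\tau}(A_j)\to-\infty$. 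That contradicts the lower bound $c_j+\tau h\geq-o(1)$. Here the $\delta$-perturbation is harmless because we may take $\delta<\lambda/(2C)$.

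\emph{Lower tests.} Now $G_{\varepsilon_j,\tau}(A_j)\leq c_j(x_j)+\tau h(x_j)$ with $A_j=\Lev\phi(x_j)-C\delta I$. If the limit $A$ is not positive semidefinite, $\det_+A=0$ and there is nothing to prove. If $A\geq0$, use $\widetilde F_\varepsilon\geq F_\varepsilon-c_n\varepsilon^2$ and the arithmetic--geometric inequality $\frac1n\tr(AB)\geq(\det A\det B)^{1/n}=(\det A)^{1/n}$ for $A\geq0$ and every $B\in\mathcal B_\varepsilon$. Hence $F_\varepsilon(A)\geq(\det A)^{1/n}$. Passing from $A_j$ to $A$ uses the Lipschitz constant of order $\varepsilon^{-1}$, which does not stay bounded. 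To avoid this, apply the estimate directly to $A_j$ when $A_j\geq0$, and perturb by $-\delta$ further if needed; alternatively use only monotonicity, since $A_j\geq A'$ for a fixed $A'$ slightly below $A$. This gives $(\det_+A')^{1/n}+\tau\tr A'\leq\tau h(x)+o(1)$.

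The extra term $\tau\tr A'\geq0$ is the obstacle. It only strengthens the bound when $\tr A'\geq0$, so the inequality obtained is even stronger than \eqref{eq:lower-tests}. Dropping that term and letting $A'\uparrow\Lev\phi(x)$ and $\delta\to0$, continuity of $\det_+$ on positive semidefinite limits gives \eqref{eq:lower-tests}.

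The main care needed is the non-uniform Lipschitz dependence of $F_\varepsilon$ as $\varepsilon\to0$. The plan handles it throughout by comparing only with fixed matrices through monotonicity, never through closeness estimates.
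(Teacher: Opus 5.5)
Your proposal is correct and follows essentially the same route as the paper. Both arguments pass touching jets to the smooth $v_j$ and use monotonicity of $G_{\varepsilon_j,\tau}$. In the upper case both test with a determinant-one matrix having one large eigenvalue along a negative direction: the paper first takes the limit inequality for every fixed $B>0$ with $\det B=1$, and your $\varepsilon$-dependent $B$ is the same test with $t=\varepsilon^{-1}$. In the lower case both use AM--GM when $\Lev\phi(x)>0$. One small slip: your $-\infty$ bound is written at the limit matrix $A$ rather than at $A_j$, but it holds at $A_j$ immediately, since $\tr(A_jB)$ is linear in $A_j$ and $e^*A_je\to e^*Ae<0$ (or by monotonicity against the fixed matrix $A+\eta I$, as you suggest).
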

\begin{proof}
Use a local real coordinate ball and the approximation of touching
jets under locally uniform convergence
\cite[Proposition~4.3]{CIL}. Both the point and the first derivative
in \eqref{eq:coordinate-operator} vary in this approximation; smoothness
of $V,D$ shows that the intrinsic Hermitian matrices converge to
$H=\Lev\phi(x)$.

For an upper jet, monotonicity and
\eqref{eq:sequence-equation} imply, in the limit,
\begin{equation}\label{eq:all-controls}
 \tau h(x)\leq\frac1n\tr(HB)+\tau\tr H
 \quad\text{for every }B>0\text{ with }\det B=1.
\end{equation}
Indeed any fixed such $B$ belongs to $\mathcal B_{\varepsilon_j}$
eventually, and the convolution error tends to zero. If $H$ had a
negative eigenvalue, take the corresponding eigenvalue of $B$ equal
to $t$ and all others equal to $t^{-1/(n-1)}$. Since $n\geq2$,
the right side of \eqref{eq:all-controls} tends to $-\infty$ as
$t\to\infty$. Thus $H\geq0$. Taking $B=I$ gives the trace inequality.

For a lower jet there is nothing to prove unless $H$ is positive
definite: semidefinite singular matrices have determinant zero, and
$\det{}_+H$ is zero when $H$ is not nonnegative.
For $H>0$, the approximating matrices are uniformly positive for large
$j$. The formula for the minimum over matrices with determinant one
\[
 \inf_{B>0,\det B=1}\frac1n\tr(HB)=(\det H)^{1/n}
\]
follows from the arithmetic--geometric mean inequality. Equality holds at
\[
 B=(\det H)^{1/n}H^{-1}.
\]
The same formula and its local continuity
identify the limit of the truncated minima. The lower-test inequality
in \eqref{eq:sequence-equation} therefore gives
\[
 (\det H)^{1/n}+\tau\tr H\leq\tau h(x).
\]
Drop the nonnegative trace
term and raise to the $n$th power. This proves \eqref{eq:lower-tests}.
\end{proof}

\begin{proposition}\label{prop:finite-ma}
Under the hypotheses of Lemma~\ref{lem:source-data}, the limit $v$ is
continuous PSH, is not pluriharmonic, and satisfies
\begin{equation}\label{eq:finite-ma}
 0\leq(\ddc v)^n\leq2^n n!(\tau h)^n\vol.
\end{equation}
In particular, its total Monge--Amp\`ere mass is finite.
\end{proposition}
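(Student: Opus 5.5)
The proof turns the viscosity inequalities of Lemma~\ref{lem:limit-tests} into statements about $v$ and its Bedford--Taylor measure.

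\emph{Plurisubharmonicity.} Continuity is already known, since $\Lip(v)\leq1$. For PSH, the first inequality in \eqref{eq:upper-tests} says $\Lev\phi(x)\geq0$ for every $C^2$ upper test. So $v$ is a viscosity subsolution of $-\lambda_{\min}(\Lev\phi)\leq0$. In local holomorphic coordinates the restriction of $v$ to any holomorphic disc is then a viscosity subharmonic function; an upper test on the disc extends to an ambient upper test by adding a large multiple of the squared normal distance. Viscosity subharmonicity is equivalent to the classical notion for continuous functions, so $v$ is PSH.

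\emph{Not pluriharmonic.} Suppose $\ddc v=0$. Then $v$ is locally the real part of a holomorphic function, hence smooth and $\Delta v=0$. Using $\phi=v$ itself as an upper test at $p$ in \eqref{eq:upper-tests} gives $\tau h(p)\leq(1/n+\tau)\tr\Lev v(p)=0$. This contradicts $h(p)>0$.

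\emph{Measure bound.} The aim is \eqref{eq:finite-ma}, and I would localize to a coordinate ball $\Omega$. The obstacle is that \eqref{eq:lower-tests} is only a viscosity supersolution property, while the Bedford--Taylor measure is defined by pluripotential theory. I would use the comparison principle on $\Omega$ as follows.
\begin{enumerate}
\item Solve the Dirichlet problem $(\ddc w)^n=2^nn!(\tau h+\delta)^n\vol$ with $w=v$ on $\partial\Omega$, by Bedford--Taylor or Caffarelli--Kohn--Nirenberg--Spruck after approximating the boundary data. This gives a continuous PSH $w$, smooth for smooth data.
\item Show $w\leq v$. At an interior maximum of $w-v$, perturb by a small strictly PSH quadratic so that $w$ is a strict classical subsolution. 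The smooth $w$ then touches $v$ from below with $\det\Lev w>(\tau h)^n$, contradicting \eqref{eq:lower-tests}. An approximation handles nonsmooth $w$.
\item The standard consequence of $w\leq v$ with equal boundary values is the domination of Monge--Amp\`ere measures, $(\ddc v)^n\leq(\ddc w)^n$. This follows the argument showing that viscosity and pluripotential supersolutions coincide in the Eyssidieux--Guedj--Zeriahi framework.
\item Alternatively, use the local smooth approximation of $v$ by $v_j$. Since $G_{\varepsilon_j,\tau}\geq F_{\varepsilon_j}-c_n\varepsilon_j^2$, at each point $\det\Lev v_j\leq\bigl(\tau h+c_j+c_n\varepsilon_j^2\bigr)^n$ wherever $\Lev v_j\geq0$. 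The difficulty is that $\Lev v_j$ need not be nonnegative, so this route would need a convexity-type control on $v_j$.
\end{enumerate}
I would try the viscosity-to-pluripotential route first, citing the known equivalence for continuous functions: a continuous PSH viscosity supersolution of $(\ddc u)^n=f\vol$ with $f\geq0$ continuous satisfies $(\ddc u)^n\leq f\vol$. Applying this with $f=2^nn!(\tau h)^n$, using \eqref{eq:levi-normalizations} for the normalization, gives \eqref{eq:finite-ma} on each ball.

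\emph{Conclusion.} Patching the local bounds gives \eqref{eq:finite-ma} on all of $M$. Since $h$ has compact support, the total mass is at most $2^nn!\tau^n\int_Mh^n\vol<\infty$.
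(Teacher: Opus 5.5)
Your proposal is correct and is essentially the paper's argument. Plurisubharmonicity comes from the upper-test condition $\Lev\phi\geq0$ via the viscosity characterization of PSH functions. The bound \eqref{eq:finite-ma} comes from the fact that a continuous PSH viscosity supersolution is a pluripotential supersolution; the paper applies the Guedj--Lu--Zeriahi envelope theorem to \eqref{eq:lower-tests} and notes that the PSH envelope of $v$ is $v$. Non-pluriharmonicity follows by using $v$ itself as an upper test where $h>0$.

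One caution on your sketched comparison route: $w\leq v$ with equal boundary values gives only the total mass inequality $\int_\Omega(\ddc v)^n\leq\int_\Omega(\ddc w)^n$, not domination of measures. For instance, $\max\{w,c\}$ puts singular mass on a level set of $w$. That route would therefore need the comparison on every small ball, followed by a differentiation argument.
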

\begin{proof}
In compatible holomorphic coordinates the covariant projected Hessian
is the usual complex Hessian, up to the metric change of frame. The
first part of \eqref{eq:upper-tests} implies the standard PSH submean
condition \cite[Proposition~1.3]{EGZ}. The continuous Bedford--Taylor
measure is defined by local smooth PSH approximation
\cite[Section~2]{BT}. On any relatively compact coordinate ball, $v$
is bounded. The local envelope theorem for PSH functions
\cite[Theorem~3.9]{GLZ}, applied to \eqref{eq:lower-tests}, bounds its
normalized Monge--Amp\`ere measure by $(\tau h)^n\vol$. The PSH envelope of $v$ equals $v$, since $v$ is already PSH.

Under a holomorphic change of coordinates, the complex Hessian
determinant is multiplied by the squared modulus of the complex
Jacobian. The metric density is multiplied by the same factor.
The ratio is therefore intrinsic. The covariance of the continuous
Bedford--Taylor extension follows from its uniqueness and the smooth
change-of-variables formula. Local measures thus agree on overlaps
and define one global measure. Formula~\eqref{eq:levi-normalizations}
gives the factor $2^n n!$ in \eqref{eq:finite-ma}. Since $h$
is smooth and compactly supported, the right side has finite mass.

If $v$ were pluriharmonic, it would be smooth, and $\phi=v$ would be
an upper test at every point with zero Levi matrix. At a point where
$h>0$, the second inequality of \eqref{eq:upper-tests} is impossible.
\end{proof}

\section{Finiteness of the Ricci integral}\label{sec:ricci-mass}

We first regularize the function of Proposition~\ref{prop:finite-ma} by
the scalar heat equation. A holomorphic canonical section then gives
weighted estimates that imply finiteness of the Ricci integral in
Proposition~\ref{prop:finite-ricci-criterion}.

\subsection{Strict heat regularization}\label{sec:smoothing}

Let $P_t$ be the scalar heat semigroup for the real Laplacian. On a
complete manifold with $\Ric\geq0$, its kernel $H(t,x,y)$ is
nonnegative, symmetric, and satisfies $\int_M H(t,x,y)\,d\mu_g(y)=1$.
We use $B_x(r)$ to denote the open metric ball with centre $x$ and radius $r$. The Gaussian heat-kernel and
volume comparison estimates imply
\begin{equation}\label{eq:second-moment}
 \int_Md(x,y)^2H(t,x,y)\,d\mu_g(y)\leq C_{\mathrm h}t.
\end{equation}
These standard scalar facts follow, for example, from
\cite[Section~3 and Theorems~6.4--6.5]{Grigoryan}. To see the last
implication, split $M$ into annuli of radii $2^j\sqrt t$, use the
Gaussian upper bound on each annulus, and use volume doubling to
compare its volume with $\mu_g(B_x(\sqrt t))$. The resulting series
is bounded by $Ct\sum_{j\geq0}2^{(2+2n)j}e^{-c4^j}$, uniformly in
$x,t$. Near the diagonal the estimate follows from stochasticity.

If $\Lip(v)\leq1$, Cauchy--Schwarz with respect to the probability
measure $H(t,x,y)\,d\mu_g(y)$ gives
\begin{equation}\label{eq:heat-comparison}
 |P_tv(x)-v(x)|\leq\int d(x,y)H(t,x,y)\,d\mu_g(y)
 \leq\sqrt{C_{\mathrm h}t}.
\end{equation}
The same moment bound makes the average well defined even though $v$
need not be bounded.

\begin{proposition}\label{prop:strict-approximation}
For the function $v$ of Proposition~\ref{prop:finite-ma}, on the
simply connected noncompact nonflat manifold there are smooth strictly
PSH functions $\phi_j$, indexed by $j=0,1,\ldots$, and $C<\infty$ such that
\begin{equation}\label{eq:strict-approximation}
 |\nabla\phi_j|\leq1,\qquad
 \phi_j\longrightarrow v\text{ locally uniformly},\qquad
 |\phi_j-v|\leq C\text{ on }M.
\end{equation}
\end{proposition}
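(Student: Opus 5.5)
We take $\phi_j=P_{t_j}v$ for a sequence $t_j\downarrow0$, where $P_t$ is the scalar heat semigroup. We may need to add a small multiple of a strictly PSH term, or pass to a later heat time, if strictness fails at the chosen times. Three properties are needed.

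\emph{Closeness and convergence.} By \eqref{eq:heat-comparison}, $|P_tv-v|\leq\sqrt{C_{\mathrm h}t}$ on $M$. Choosing $t_j\leq1$ gives the uniform bound $|\phi_j-v|\leq C$, and $t_j\to0$ gives uniform, hence locally uniform, convergence.

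\emph{Gradient bound.} Under $\Ric\geq0$ we have the Bakry--\'Emery commutation $|\nabla P_tf|\leq P_t|\nabla f|$. Applied to a Lipschitz function it gives $\Lip(P_tv)\leq\Lip(v)\leq1$. To justify this for the unbounded $v$, approximate $v$ by truncations $\max(\min(v,k),-k)$, which are still $1$-Lipschitz. The moment bound \eqref{eq:second-moment} lets us pass to the limit in the heat integrals. Alternatively one can argue directly with the coupling/Lipschitz characterization, since $P_t$ contracts Lipschitz constants when $\Ric\geq0$.

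\emph{Plurisubharmonicity and strictness.} By Ni--Tam \cite{NiTam13}, on a complete K\"ahler manifold with nonnegative bisectional curvature the heat flow preserves plurisubharmonicity for continuous PSH initial data of controlled growth; our $v$ grows at most linearly. Their maximum-principle argument applies to the Hermitian tensor $\ddc P_tv$ as a solution of the Lichnerowicz heat equation. It shows that $\ddc P_tv\geq0$ for $t>0$, and that its null space is a parallel distribution for $t$ in an interval. It also gives the kernel alternative: either $\ddc P_tv>0$ for all $t>0$, or the kernel of $\ddc P_tv$ is a nonzero $J$-invariant parallel subbundle. In the second case the kernel must be the whole tangent bundle, by Lemma~\ref{lem:no-product} applied on the simply connected noncompact nonflat manifold, so $\ddc P_tv\equiv0$. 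Then $P_tv$ is pluriharmonic for all $t>0$. Letting $t\to0$, $v$ would be a locally uniform limit of pluriharmonic functions, hence pluriharmonic, contradicting Proposition~\ref{prop:finite-ma}. Therefore $\ddc P_tv>0$ everywhere for every $t>0$, and each $\phi_j$ is smooth strictly PSH. Smoothness itself follows from parabolic regularity of the heat semigroup.

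\emph{Main obstacle.} The hardest step is this last one: applying the Ni--Tam preservation and strong maximum principle to a merely Lipschitz, non-smooth PSH initial datum with linear growth. The proposed route is to use their Proposition/Theorem for continuous PSH functions of sublinear-type growth directly, since $|v(x)|\leq d(o,x)$ fits those hypotheses. If a smooth initial datum is required, we would first apply the flow for a short time to gain smoothness and gradient bounds, then restart it. Lemma~\ref{lem:no-product} is used precisely to exclude a proper parallel kernel.
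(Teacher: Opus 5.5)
Your proposal is correct and follows essentially the paper's route: you regularize by the heat flow $P_tv$, use the Ni--Tam parallel-kernel theorem with Lemma~\ref{lem:no-product} excluding a proper kernel and Proposition~\ref{prop:finite-ma} excluding the pluriharmonic case, use a heat gradient estimate (your Bakry--\'Emery bound plays the role of Ni--Tam's Lemma~1.4), and use \eqref{eq:heat-comparison} for the uniform bound. One small correction: the cited kernel theorem gives a parallel kernel, and hence strictness, only on a short interval $0<t\leq T$, not for every $t>0$, so you should choose $t_j\in(0,T]$, for example $\phi_j=P_{T/(j+1)}v$ as the paper does.
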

\begin{proof}
The theorem on the kernel of the Levi form and the heat gradient
estimate of Ni and Tam
\cite[Theorem~3.1 and Lemma~1.4]{NiTam03} apply to the Lipschitz PSH
function $v$. On a sufficiently small positive time interval the
kernel of the smooth Levi form has constant complex rank and is
parallel. Complex rank $n$ gives
a pluriharmonic initial limit, excluded by
Proposition~\ref{prop:finite-ma}. A rank strictly between zero and
$n$ contradicts Lemma~\ref{lem:no-product}. Thus $P_tv$ is strictly
PSH for $0<t\leq T$ after reducing $T$ if necessary. Set
$\phi_j=P_{T/(j+1)}v$. Local initial convergence and the gradient
estimate give the first two assertions in
\eqref{eq:strict-approximation}; \eqref{eq:heat-comparison} gives the
last, with $C=\sqrt{C_{\mathrm h}T}$.
\end{proof}

\subsection{Holomorphic canonical sections}

\begin{lemma}\label{lem:canonical-weight}
Let $K_M=\Lambda^{n,0}T^*M$ be the canonical bundle with the metric
induced by $g$. There are a nonzero holomorphic section $s$ of $K_M$ and
$q>0$ such
that, for $a_j=|s|^2e^{-q\phi_j}$,
\begin{equation}\label{eq:canonical-budget}
 \sup_j\int_Ma_j\vol<\infty,\qquad
 |s|^2>0\quad\text{almost everywhere}.
\end{equation}
Each $a_j$ is smooth and nonnegative, including at the zeros of $s$.
\end{lemma}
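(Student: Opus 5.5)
The plan is to produce $s$ by Hörmander's $L^2$ method for $\bar\partial$ on the complete Kähler manifold $M$, using the strictly PSH weight $q\phi_0$ from Proposition~\ref{prop:strict-approximation}. The relevant curvature is that of $K_M$ twisted by $e^{-q\phi_0}$. The Chern curvature of $K_M$ with the metric induced by $g$ is $-\rho$, so sections of $K_M$ are $(n,0)$-forms. For $(n,1)$-forms with values in the trivial bundle with weight $e^{-q\phi_0}$, the Bochner--Kodaira--Nakano curvature term is exactly $q\,\ddc\phi_0$ and needs no Ricci sign. Strict positivity of $\ddc\phi_0$ holds everywhere but is not uniform, so I will use the version of Hörmander's theorem with a nonuniformly positive curvature form. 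In that version the solution of $\bar\partial f=\beta$ is controlled by $\int|\beta|^2_{q\ddc\phi_0}e^{-q\phi_0}$.

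The concrete construction proceeds as follows.
\begin{enumerate}
\item Fix a point $x_0$, a holomorphic coordinate ball $W$ around it, and a local nonvanishing holomorphic $n$-form $dz_1\wedge\cdots\wedge dz_n$ on $W$.
\item Multiply it by a cutoff $\chi$ equal to one near $x_0$ and supported in $W$, and set $\beta=\bar\partial\chi\wedge dz$.
\item The form $\beta$ is compactly supported in an annulus where $\ddc\phi_0\geq c\omega$ for some $c>0$ (compactness plus strictness), so the weighted norm of $\beta$ is finite.
\item Solve $\bar\partial f=\beta$ with $\int|f|^2e^{-q\phi_0}\vol<\infty$.
\item Set $s=\chi\,dz-f$. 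Then $s$ is holomorphic, and $\int|s|^2e^{-q\phi_0}<\infty$ because $\chi\,dz$ has compact support.
\end{enumerate}

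For nonvanishing of $s$, add to the weight a term $2n\,\chi_1\log|z|$ supported near $x_0$. This forces $f(x_0)=0$, since nonintegrability of $|z|^{-2n}$ makes $f$ vanish there, and so $s(x_0)\neq0$. The added term spoils plurisubharmonicity only where $\ddc\log|z|$ is cut off. That region is compact, $\ddc\phi_0\geq c\omega$ there, and taking $q$ large compensates. This is where $q>0$ is chosen. Since $s\not\equiv0$ is holomorphic and $M$ is connected, its zero set is a proper analytic subset, which has measure zero. This gives $|s|^2>0$ almost everywhere. Smoothness and nonnegativity of $a_j$ are immediate, because $|s|^2$ is smooth (a squared norm of a smooth section) and $\phi_j$ is smooth.

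For uniformity in $j$, use the bound $|\phi_j-v|\leq C$ from \eqref{eq:strict-approximation}. It gives $|\phi_j-\phi_0|\leq2C$, hence $a_j\leq e^{2qC}|s|^2e^{-q\phi_0}$. Integrating yields $\sup_j\int a_j\vol\leq e^{2qC}\int|s|^2e^{-q\phi_0}<\infty$.

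The main obstacle is justifying the $L^2$ existence theorem on a complete noncompact manifold whose weight is only pointwise strictly PSH, not uniformly. I would first rely on Demailly's complete Kähler version of Hörmander's estimate, where the curvature operator enters pointwise through $|\beta|^2_{q\ddc\phi_0}$. If that is delicate, the fallback is to add $\epsilon b$ to the weight and let $\epsilon\to0$, which keeps the weighted norm of the compactly supported $\beta$ bounded. This fallback is problematic, though, because $\ddc b$ is only bounded, not nonnegative, by \eqref{eq:exhaustion}. So the pointwise-curvature version is the route I would commit to.
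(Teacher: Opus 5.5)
Your proposal is correct and matches the paper's proof. In both, the uniform bound comes from $|\phi_j-\phi_0|\le 2C$ via \eqref{eq:strict-approximation}, and $|s|^2>0$ almost everywhere and smoothness of $a_j$ follow exactly as you describe. The paper gets $s$ and $q$ by citing the weighted canonical-section construction of Ni--Tam, while you carry out that $L^2$ construction yourself: Demailly's complete K\"ahler estimate for $(n,1)$-forms with pointwise positive curvature $q\,\ddc\phi_0$, plus a cut-off $2n\log|z|$ pole to force $s(x_0)\neq0$. This construction is valid and uses only strict plurisubharmonicity of $\phi_0$ and completeness of $\omega$, not the linear growth the paper mentions.
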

\begin{proof}
The weighted canonical-section construction in
\cite[proof of Lemma~4.2(ii), equation~(4.3)]{NiTam03} gives $s\not\equiv0$
and $q>0$ with $\int|s|^2e^{-q\phi_0}\vol<\infty$, since $\phi_0$
is smooth strictly PSH and has at most linear growth. The nonzero
holomorphic section has a zero set of measure zero. By
\eqref{eq:strict-approximation}, $\phi_0-\phi_j\leq2C$, so
$a_j\leq e^{2qC}|s|^2e^{-q\phi_0}$. This proves the uniform integral bound in \eqref{eq:canonical-budget}.
Smoothness follows from the smooth canonical metric and the local
holomorphic coefficients.
\end{proof}

\begin{lemma}\label{lem:energy}
Let $\phi$ be smooth PSH and let $f=|s|^2e^{-q\phi}$ for a
holomorphic canonical section. Then
\begin{equation}\label{eq:energy}
 E=\ddc f-f\rho+qf\ddc\phi\geq0,\qquad
 fE=df\wedge d^cf.
\end{equation}
The same identities hold after multiplying $f$ by a positive constant
and $E$ by the same constant.
\end{lemma}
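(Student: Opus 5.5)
We compute $\ddc f$ locally in holomorphic coordinates, on the open set where $s\neq0$, and then extend the identities across the zero set of $s$ by continuity. In a holomorphic chart write $s=\sigma\,dz^1\wedge\cdots\wedge dz^n$ with $\sigma$ holomorphic. The induced metric on $K_M$ gives $|s|^2=|\sigma|^2\det(g_{j\bar k})^{-1}$, up to a fixed positive normalizing constant that does not affect the argument. On the set where $s\neq0$, set $\log f=\log|\sigma|^2-\log\det(g_{j\bar k})-q\phi$. The key input is the standard formula for the Ricci form,
\[
\rho=-\ii\partial\bar\partial\log\det(g_{j\bar k}),
\]
together with $\ii\partial\bar\partial\log|\sigma|^2=0$ where $\sigma\neq0$. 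Before using it, I would check that the conventions $\rho(X,Y)=\Ric(JX,Y)$, $\omega(X,Y)=g(JX,Y)$ and $\ddc=\ii\partial\bar\partial$ make this formula hold with exactly this sign. These give
\[
\ddc\log f=\rho-q\,\ddc\phi .
\]

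Next, for the positive smooth function $f$ I would expand $\ii\partial\bar\partial e^{\log f}$:
\[
\ddc f=f\,\ddc\log f+f^{-1}\,\ii\partial f\wedge\bar\partial f .
\]
Combining this with the previous display gives
\[
E=\ddc f-f\rho+qf\ddc\phi=f^{-1}\,\ii\partial f\wedge\bar\partial f .
\]
It remains to match $\ii\partial f\wedge\bar\partial f$ with $df\wedge d^cf$ under \eqref{eq:dc-convention}. The identity $d^cf=-\tfrac12df\circ J$ gives $d^cf=\tfrac{\ii}{2}(\bar\partial f-\partial f)$, up to the sign fixed by the action of $J$ on $(1,0)$-forms. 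Then $df\wedge d^cf=\tfrac{\ii}{2}(\partial f\wedge\bar\partial f-\bar\partial f\wedge\partial f)=\ii\partial f\wedge\bar\partial f$, since the $\partial f\wedge\partial f$ and $\bar\partial f\wedge\bar\partial f$ terms vanish. This is the one place where the normalizations must be checked carefully, and it is the main obstacle I expect: it is pure bookkeeping, but the factor $\tfrac12$ in $d^c$ is precisely what makes $df\wedge d^cf=\ii\partial f\wedge\bar\partial f$ hold with no extra constant. Nonnegativity then follows from $\ii\,\partial f\wedge\bar\partial f(X,JX)=|\partial f(X)|^2$-type positivity, that is, $\ii\beta\wedge\bar\beta\ge0$ for any $(1,0)$-form $\beta$. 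So $E\ge0$ and $fE=df\wedge d^cf$ hold on $\{s\ne0\}$.

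To extend across the zeros, I would avoid dividing by $f$. The form $E$ is globally smooth by its definition, since $f$, $\rho$ and $\phi$ are smooth, and so is $df\wedge d^cf$. The identity $fE=df\wedge d^cf$ holds on the dense open set $\{s\ne0\}$, because the zero set of a nonzero holomorphic section has empty interior; by continuity it therefore holds everywhere. For nonnegativity at zeros, it suffices to show that $E$ is a limit of nonnegative forms. $E\ge0$ holds on the dense set $\{s\neq0\}$, nonnegativity of a real $(1,1)$-form is a closed condition, and $E$ is continuous, so $E\ge0$ everywhere. Finally, replacing $f$ by $cf$ with $c>0$ scales $\ddc f$, $f\rho$ and $qf\ddc\phi$ linearly, so $E$ becomes $cE$. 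Both sides of $fE=df\wedge d^cf$ then scale by $c^2$, which proves the final sentence.
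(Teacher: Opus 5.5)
Your proof is correct and follows the same route as the paper. Both arguments use the canonical-metric curvature identity $\ddc\ell=\rho$ for $\ell=-\log\det(g_{j\bar k})$, followed by a direct local differentiation of $f$. The only difference is at the zeros of $s$: the paper avoids dividing by $f$ by writing $E=\ii e^{\psi}\beta\wedge\bar\beta$ with $\psi=\ell-q\phi$ and $\beta=\partial\sigma+\sigma\partial\psi$, which is smooth across $\{\sigma=0\}$, whereas you compute $E=f^{-1}\ii\partial f\wedge\bar\partial f$ on $\{s\neq0\}$ and extend by density and continuity, which is equally valid (and trivial where $s$ vanishes identically).
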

\begin{proof}
In a holomorphic coordinate patch write $s=\sigma\,dz_1\wedge\cdots
\wedge dz_n$ and $|s|^2=e^\ell|\sigma|^2$. The canonical metric
curvature formula gives $\ddc\ell=\rho$
\cite[equations~(4.63)--(4.67)]{Ballmann}; a fixed dimensional factor
in the canonical norm has zero logarithmic derivative. Put
$\psi=\ell-q\phi$. Direct differentiation of
$f=e^\psi|\sigma|^2$ and the Cauchy--Riemann equations gives
\[
 E=\ii e^\psi
     (\partial\sigma+\sigma\partial\psi)
       \wedge\overline{(\partial\sigma+\sigma\partial\psi)}.
\]
This is nonnegative. Multiplying by $f$ and differentiating $f$ once
gives $fE=df\wedge d^cf$. Both computations use smooth local
coefficients and remain valid at $\sigma=0$. The tensorial expressions agree on
overlaps. Finally, both sides of the second identity scale
quadratically, and the first identity scales linearly.
\end{proof}

\subsection{Weighted mixed estimates}\label{sec:bezout}

The identity in Lemma~\ref{lem:energy} gives two positive forms whose
mixed integrals can be estimated by compact integration by parts.

\subsubsection{A positive form from the logarithmic weight}
Let $\phi$ be smooth PSH with $|\nabla\phi|\leq L$, put $A=\ddc\phi$,
and suppose $f\geq0$ is smooth and integrable and satisfies
\eqref{eq:energy} for some $q\geq0$. Define
\begin{equation}\label{eq:log-forms}
 V=\log(1+f),\qquad H=f-\log(1+f),\qquad B=qA+\ddc V,
 \qquad\Gamma=\ddc f+qfA.
\end{equation}
Since $\rho\geq0$, equations \eqref{eq:energy} give
\begin{equation}\label{eq:gamma}
 \Gamma=f\rho+E\geq0,\qquad df\wedge d^cf=fE\leq f\Gamma.
\end{equation}
Differentiating $V,H$ gives the two identities
\begin{align}
 B-\frac{f}{1+f}\rho
   &=\frac{q}{1+f}A+\frac{1}{(1+f)^2}E\geq0,
       \label{eq:B-positive}\\
 \ddc H-f\ddc V&=\frac{df\wedge d^cf}{1+f}\geq0.
       \label{eq:H-positive}
\end{align}
Also $0\leq V\leq f$ and $0\leq H\leq f$. Thus $B$ is smooth,
closed, and nonnegative. These assertions hold even where $f=0$.

Let $T$ be a product of $n-1$ smooth closed nonnegative $(1,1)$-forms.
Exterior positivity and the quadratic polynomial associated with
$(d a+t\,d b)\wedge d^c(a+tb)\wedge T$ give the mixed
Cauchy--Schwarz inequality. In particular, \eqref{eq:gamma},
$|\nabla\phi|\leq L$, and Young's inequality imply, as inequalities between top degree forms,
\begin{equation}\label{eq:weighted-young}
 |df\wedge d^c\phi\wedge T|
 \leq\delta\Gamma\wedge T+\frac{L^2}{4\delta}f\omega\wedge T
 \qquad(\delta>0).
\end{equation}
Here and below an absolute value of a top form means the absolute value
of its coefficient against $\vol$. One can verify the constant by
symmetrizing the mixed cross term, applying the discriminant inequality,
and using $d\phi\wedge d^c\phi\leq L^2\omega$.

\begin{lemma}\label{lem:weighted-reduction}
For a fixed finite range of integers $N\geq3$, there is a constant
$C=C(q,L,N)$, depending also on the fixed cutoff family but independent
of $R\geq1$, $f$, $\nabla^2\phi$, and $T$, such that
\begin{align}
 \int\chi_R^N f A\wedge T
    &\leq C\int\chi_R^{N-2}f\omega\wedge T,
           \label{eq:A-step}\\
 \int\chi_R^N f B\wedge T
    &\leq C\int\chi_R^{N-2}f\omega\wedge T.
           \label{eq:B-step}
\end{align}
All integrals on both sides are compactly supported.
\end{lemma}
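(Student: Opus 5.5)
The plan is to reduce both estimates to Stokes' theorem against the compactly supported weights $\chi_R^N$, using only $|\nabla\phi|\leq L$, the closedness of $T$ and $A$, and the cutoff bounds \eqref{eq:cutoff-powers}. Since $\chi_R$ has compact support and $T$ is a product of smooth closed forms, every integrand is smooth and compactly supported. All integrals are therefore finite, and absorbing a term into the left side is legitimate. All top forms $\chi_R^N fA\wedge T$, $\chi_R^N\Gamma\wedge T$, $\chi_R^NfB\wedge T$ and $\chi_R^{N-2}f\omega\wedge T$ are nonnegative, by positivity of products of nonnegative $(1,1)$-forms and by \eqref{eq:gamma}, \eqref{eq:B-positive}.

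\emph{Step 1: \eqref{eq:A-step}.} Since $T$ is closed, Stokes gives
\[
\int\chi_R^N fA\wedge T=-\int\chi_R^N\,df\wedge d^c\phi\wedge T-\int f\,d(\chi_R^N)\wedge d^c\phi\wedge T,
\]
up to the sign convention for $d^c$ in \eqref{eq:dc-convention}.
\begin{itemize}
\item For the second term, use \eqref{eq:cutoff-powers}, $|\nabla\phi|\leq L$, the mixed Cauchy--Schwarz inequality and $\chi_R^{N-1}\leq\chi_R^{N-2}$. It is bounded by $C_NL\int\chi_R^{N-2}f\omega\wedge T$.
\item For the first term, apply \eqref{eq:weighted-young} pointwise, multiplied by $\chi_R^N$. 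This gives the bound $\delta\int\chi_R^N\Gamma\wedge T+\frac{L^2}{4\delta}\int\chi_R^Nf\omega\wedge T$.
\end{itemize}
Next, $\Gamma=\ddc f+qfA$. A second application of Stokes moves both derivatives onto the cutoff: $\int\chi_R^N\ddc f\wedge T=\int f\,\ddc(\chi_R^N)\wedge T$, and this is at most $C_N\int\chi_R^{N-2}f\omega\wedge T$ by \eqref{eq:cutoff-powers}, since $R\geq1$ and $f\geq0$. Hence
\[
\int\chi_R^N\Gamma\wedge T\leq C_N\int\chi_R^{N-2}f\omega\wedge T+q\int\chi_R^NfA\wedge T.
\]
Choose $\delta$ with $\delta q\leq1/2$; any $\delta$ works if $q=0$. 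Absorbing the term $\frac12\int\chi_R^NfA\wedge T$ into the left side, which is finite, gives \eqref{eq:A-step} with $C=C(q,L,N)$. No Hessian bound on $\phi$ enters, only $|\nabla\phi|$ and the cutoff constants, so $C$ is independent of $R$, $f$, $\nabla^2\phi$ and $T$.

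\emph{Step 2: \eqref{eq:B-step}.} Write $fB=qfA+f\ddc V$. By \eqref{eq:H-positive}, $f\ddc V\leq\ddc H$, hence $f\ddc V\wedge T\leq\ddc H\wedge T$. Stokes again gives $\int\chi_R^N\ddc H\wedge T=\int H\,\ddc(\chi_R^N)\wedge T$. By \eqref{eq:cutoff-powers} and $0\leq H\leq f$, this is at most $C_N\int\chi_R^{N-2}f\omega\wedge T$. Adding $q$ times \eqref{eq:A-step} yields \eqref{eq:B-step}.

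\emph{Main obstacle.} The only delicate point is the absorption in Step 1. The cross term $df\wedge d^c\phi$ must be controlled by $\Gamma$ rather than by $|df|$, because no pointwise gradient bound on $f$ is available. Since $\Gamma$ itself contains $qfA$, this produces a circular estimate, which is closed by the smallness of $\delta$. I would check carefully that the constant in \eqref{eq:weighted-young} survives multiplication by the nonnegative weight $\chi_R^N$. I would also check that the signs from Stokes match the convention \eqref{eq:dc-convention}, although only absolute values are used, so the signs do not matter.
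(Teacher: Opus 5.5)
Your proposal is correct and follows the paper's proof essentially step for step: Stokes against $\chi_R^N$, the weighted Young inequality for the cross term $df\wedge d^c\phi\wedge T$, a second integration by parts writing $\int\chi_R^N\Gamma\wedge T$ as $q\int\chi_R^N fA\wedge T$ plus a cutoff error, absorption with $\delta q\leq 1/2$, and then $f\ddc V\leq\ddc H$ together with $0\leq H\leq f$ for the $B$-estimate. The only cosmetic difference is that you fold the factors $1/R$ into the constants using $R\geq1$, whereas the paper keeps them explicit.
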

\begin{proof}
Write $I_N=\int\chi_R^NfA\wedge T$ and
$J_N=\int\chi_R^Nf\omega\wedge T$. Compact Stokes and $dT=0$ give
\[
 I_N=-\int\chi_R^Ndf\wedge d^c\phi\wedge T
       -\int f\,d(\chi_R^N)\wedge d^c\phi\wedge T.
\]
Apply \eqref{eq:weighted-young} to the first term and the metric mixed
cross estimate to the second. By \eqref{eq:cutoff-powers},
\begin{equation}\label{eq:A-preabsorb}
 I_N\leq\delta\int\chi_R^N\Gamma\wedge T
       +\frac{L^2}{4\delta}J_N+\frac{CNL}{R}J_{N-1}.
\end{equation}
Two integrations by parts, justified by compact support, yield
\[
 \int\chi_R^N\Gamma\wedge T
   =qI_N+\int f\ddc(\chi_R^N)\wedge T
   \leq qI_N+\frac{C_N}{R}J_{N-2}.
\]
As $0\leq\chi_R\leq1$, $J_N,J_{N-1}\leq J_{N-2}$.
Choose $\delta=[2(q+1)]^{-1}$ so that $\delta q\leq1/2$ and absorb
the $I_N$ term in \eqref{eq:A-preabsorb}. This proves
\eqref{eq:A-step}, with for example a constant bounded by
$2(\delta C_N+L^2/(4\delta)+CNL)$ for $R\geq1$.

For \eqref{eq:B-step}, use \eqref{eq:H-positive} and then compact
integration by parts:
\[
 \int\chi_R^N fB\wedge T
 \leq qI_N+\int\chi_R^N\ddc H\wedge T
 =qI_N+\int H\ddc(\chi_R^N)\wedge T.
\]
The last term is at most $C_NJ_{N-2}/R$ because $0\leq H\leq f$.
Equation~\eqref{eq:A-step} now proves the result. The primitives in these integrations by parts are smooth and
compactly supported, so Stokes' theorem
\cite[Theorem~16.11]{LeeSM} applies.
\end{proof}

\subsubsection{Induction on the number of factors}
Iterate Lemma~\ref{lem:weighted-reduction} with
$T=A^a\wedge B^b\wedge\omega^{n-1-a-b}$. Every form involved is
closed and nonnegative. Increasing the original cutoff exponent by
two for each factor removed gives
\begin{equation}\label{eq:mixed-weighted-bound}
 \int\chi_R^{N+2(a+b)} fA^a\wedge B^b\wedge\omega^{n-a-b}
 \leq C^{a+b}\int\chi_R^N f\omega^n
 \leq C^{a+b}n!\int_M f\vol,
\end{equation}
provided the finitely many intermediate exponents are at least three.
The proof is induction on $a+b$: remove one $A$ or $B$ factor using
the corresponding inequality and then apply the induction hypothesis.
The constants in a finite exponent range are fixed in advance.

\begin{proposition}\label{prop:top-comparison}
With $N=2n+5$ there is $D=D(n,q,L,\chi)$ such that
\begin{equation}\label{eq:bezout-compact}
 \int\chi_R^N\left(\frac{f}{1+f}\right)^n\rho^n
 \leq q^n\int\chi_R^N A^n+\frac{D}{R}\int_M f\vol
 \qquad(R\geq1).
\end{equation}
\end{proposition}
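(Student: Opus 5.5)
We compare $B^n$ with both ends of the inequality, then pass to $A^n$ by a telescoping argument. Since $B-\frac{f}{1+f}\rho\geq0$ by \eqref{eq:B-positive}, and both $B$ and $\frac{f}{1+f}\rho$ are nonnegative, expanding $B^n=(\frac{f}{1+f}\rho+P)^n$ with $P\geq0$ gives the pointwise inequality
\[
 \left(\frac{f}{1+f}\right)^n\rho^n\leq B^n .
\]
The expansion is legitimate because every mixed product of nonnegative $(1,1)$-forms is a nonnegative top form. It therefore suffices to show
\[
 \int\chi_R^NB^n\leq q^n\int\chi_R^NA^n+\frac DR\int_Mf\vol .
\]

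Write $B=qA+\ddc V$ and use the telescoping identity
\[
 B^n-(qA)^n=\sum_{k=0}^{n-1}\ddc V\wedge B^k\wedge(qA)^{n-1-k}.
\]
Each summand has the form $\ddc V\wedge T$ with $T$ a product of $n-1$ smooth closed nonnegative forms. Compact Stokes moves $\ddc$ onto the cutoff:
\[
 \int\chi_R^N\,\ddc V\wedge T=\int V\,\ddc(\chi_R^N)\wedge T .
\]
By \eqref{eq:cutoff-powers} and $0\leq V\leq f$, this is at most
\[
 \frac{C_N}{R}\int\chi_R^{N-2}f\,\omega\wedge T .
\]
The point of this step is that $V$ is nonnegative and controlled by $f$, which is integrable. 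This is why the logarithm $V=\log(1+f)$ is used rather than $\log f$.

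It remains to bound $\int\chi_R^{N-2}f\,\omega\wedge B^k\wedge A^{n-1-k}$, with the powers of $q$ absorbed into constants. Apply \eqref{eq:mixed-weighted-bound} with $a=n-1-k$, $b=k$, and base exponent $N'=N-2-2(n-1)=2n+5-2n=5$. This gives
\[
 \int\chi_R^{N-2}f\,\omega\wedge B^k\wedge A^{n-1-k}\leq C^{n-1}n!\int_Mf\vol .
\]
The intermediate exponents run from $5$ up to $N-2$, so all are at least three, as Lemma~\ref{lem:weighted-reduction} requires. This is exactly why the choice $N=2n+5$ is made. Summing over $k$ yields the stated bound with $D$ depending on $n,q,L$ and the cutoff family.

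The main thing to check is the bookkeeping of cutoff exponents in the iteration, so that every application of Lemma~\ref{lem:weighted-reduction} stays in the admissible range $N\geq3$. One must also confirm that all forms in $T$ are smooth and closed, which holds because $A=\ddc\phi$ and $B=qA+\ddc V$ with $V$ smooth. Finally, integrability of $f$ is used only in the last inequality of \eqref{eq:mixed-weighted-bound}.
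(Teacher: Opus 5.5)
Your proposal is correct and follows the paper's proof step for step: the pointwise domination $(f/(1+f))^n\rho^n\leq B^n$ from \eqref{eq:B-positive}, the telescoping expansion of $B^n-(qA)^n$, moving $\ddc$ onto $\chi_R^N$ using $0\leq V\leq f$ and \eqref{eq:cutoff-powers}, and then \eqref{eq:mixed-weighted-bound} on the remaining $n-1$ factors with residual exponent $5$. One small quibble is your remark that this is ``exactly why'' $N=2n+5$ is chosen: the choice has slack, since any $N\geq 2n+1$ already keeps every exponent at which Lemma~\ref{lem:weighted-reduction} is applied at least three.
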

\begin{proof}
By \eqref{eq:B-positive} and positivity of mixed products,
$(f/(1+f))^n\rho^n\leq B^n$. Expand the difference by replacing its
$n$ factors one at a time:
\[
 B^n-q^nA^n
 =\ddc V\wedge\sum_{j=0}^{n-1}B^j\wedge(qA)^{n-1-j}.
\]
After multiplication by $\chi_R^N$, each summand integrates to
\[
 \int V\ddc(\chi_R^N)\wedge B^j\wedge(qA)^{n-1-j}.
\]
Since $0\leq V\leq f$, its absolute value is bounded by
\[
 \frac{C_Nq^{n-1-j}}R
  \int\chi_R^{N-2}f\omega\wedge B^j\wedge A^{n-1-j}.
\]
Apply \eqref{eq:mixed-weighted-bound} to the $n-1$ factors. The
remaining cutoff exponent is $N-2-2(n-1)=5$, so all preceding
integrations by parts are justified. Summing the finitely many bounds
proves \eqref{eq:bezout-compact}.
\end{proof}

\subsection{Passage to the limit}

\begin{proposition}\label{prop:finite-ricci-criterion}
Let $M$ be a complete connected noncompact K\"ahler manifold with
nonnegative holomorphic bisectional curvature and $n\geq2$. Suppose a
continuous PSH function $v$ satisfies $\Lip(v)\leq1$, has finite total
Monge--Amp\`ere mass, and
admits smooth PSH approximations satisfying
\eqref{eq:strict-approximation}, with $\phi_0$ strictly PSH. Then
\begin{equation}\label{eq:finite-ricci}
 \int_M\rho^n<\infty.
\end{equation}
\end{proposition}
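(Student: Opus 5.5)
Apply Proposition~\ref{prop:top-comparison} with $\phi=\phi_j$, $L=1$, $A_j=\ddc\phi_j$, and $f=a_j=|s|^2e^{-q\phi_j}$ from Lemma~\ref{lem:canonical-weight}. Lemma~\ref{lem:energy} supplies \eqref{eq:energy} for each $a_j$. The constant $D=D(n,q,1,\chi)$ is independent of $j$ and $R$, and \eqref{eq:canonical-budget} bounds $\int_Ma_j\vol$ uniformly in $j$. Hence
\[
 \int\chi_R^N\Bigl(\frac{a_j}{1+a_j}\Bigr)^n\rho^n
 \leq q^n\int\chi_R^N(\ddc\phi_j)^n+\frac{D'}{R}
\]
for all $j$ and all $R\geq1$. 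The proof then takes the limit $j\to\infty$ at fixed $R$, followed by $R\to\infty$.

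\emph{First limit, $j\to\infty$ at fixed $R$.} Since $\phi_j\to v$ locally uniformly and $\chi_R^N$ is compactly supported, the Bedford--Taylor continuity theorem for locally uniformly convergent bounded PSH functions \cite{BT} gives $(\ddc\phi_j)^n\to(\ddc v)^n$ weakly on $M$. Testing against the compactly supported continuous function $\chi_R^N$ yields
\[
 \int\chi_R^N(\ddc\phi_j)^n\to\int\chi_R^N(\ddc v)^n\leq\int_M(\ddc v)^n<\infty .
\]
On the left side, $a_j\to a:=|s|^2e^{-qv}$ locally uniformly, so the integrands converge uniformly on $\operatorname{supp}\chi_R$, and $\rho^n$ is a fixed smooth form. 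We therefore get
\[
 \int\chi_R^N\Bigl(\frac{a}{1+a}\Bigr)^n\rho^n\leq q^n\int_M(\ddc v)^n+\frac{D'}{R}.
\]

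\emph{Second limit, $R\to\infty$.} Monotone convergence, using $\chi_R\to1$ and $\rho^n\geq0$, gives $\int_M(a/(1+a))^n\rho^n<\infty$.

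\emph{Main obstacle.} The weight $(a/(1+a))^n$ is not bounded below. The factor $|s|^2$ vanishes on a null set, but $e^{-qv}$ decays as $v$ grows linearly. So finiteness of the weighted integral does not immediately give \eqref{eq:finite-ricci}. The plan is to remove the weight by scaling. The final sentence of Lemma~\ref{lem:energy} lets us replace $f$ by $\lambda f$ for any constant $\lambda>0$. All the estimates of Section~\ref{sec:bezout} persist, because they only use \eqref{eq:energy} and integrability. The constant $D$ does not depend on $f$, since Lemma~\ref{lem:weighted-reduction} asserts independence of $f$. The mass term becomes $\lambda D'/R$, which still vanishes as $R\to\infty$ for fixed $\lambda$. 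This gives, for every $\lambda>0$,
\[
 \int_M\Bigl(\frac{\lambda a}{1+\lambda a}\Bigr)^n\rho^n\leq q^n\int_M(\ddc v)^n .
\]
The right side is independent of $\lambda$. Let $\lambda\to\infty$. Then $\lambda a/(1+\lambda a)\uparrow1$ wherever $a>0$, which holds almost everywhere by \eqref{eq:canonical-budget}. Monotone convergence yields $\int_M\rho^n\leq q^n\int_M(\ddc v)^n<\infty$.

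The delicate points to check are two. First, the constants in Lemma~\ref{lem:weighted-reduction} and Proposition~\ref{prop:top-comparison} must be genuinely independent of the scaling $\lambda$ and of $\nabla^2\phi_j$; this holds because only $|\nabla\phi_j|\leq1$ and $q$ enter. Second, the order of limits must be $j$, then $R$, then $\lambda$.
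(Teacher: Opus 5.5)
Your proof is correct and is essentially the paper's argument: your scaling constant $\lambda$ plays the role of the paper's $\delta_k^{-1}$ in $f_{k,j}=\delta_k^{-1}|s|^2e^{-q\phi_j}$. The limits are taken in the same order ($j\to\infty$ at fixed scale and $R$ via Bedford--Taylor continuity, then $R\to\infty$, then the scale parameter using $a>0$ almost everywhere). The only adjustment is in the $R$-limit: invoke Fatou's lemma there rather than monotone convergence, since $\chi_R$ is only known to converge pointwise to $1$, not monotonically in $R$.
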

\begin{proof}
Take $s,q$ as in Lemma~\ref{lem:canonical-weight}. For a sequence
$\delta_k\downarrow0$, set
\[
 f_{k,j}=\delta_k^{-1}|s|^2e^{-q\phi_j},\qquad
 a=|s|^2e^{-qv}.
\]
The function $a$ is continuous and positive almost everywhere. For
each fixed $k$, the weights $f_{k,j}$ have a common finite $L^1$ bound
$F_k$ by \eqref{eq:canonical-budget}. Lemma~\ref{lem:energy} supplies
the scaled energy identities for each $k,j$. Apply
Proposition~\ref{prop:top-comparison} to obtain
\begin{equation}\label{eq:ordered-estimate}
 \int\chi_R^N\left(\frac{f_{k,j}}{1+f_{k,j}}\right)^n\rho^n
 \leq q^n\int\chi_R^N(\ddc\phi_j)^n+\frac{DF_k}{R}.
\end{equation}
First fix $k,R$ and let $j\to\infty$. On the compact support of
$\chi_R$, the left integrand is dominated by the smooth nonnegative
Ricci top density, and its weight converges to
$[a/(a+\delta_k)]^n$. The right integrals converge by locally uniform
Bedford--Taylor continuity. Thus
\[
 \int\chi_R^N\left(\frac{a}{a+\delta_k}\right)^n\rho^n
 \leq q^n\int\chi_R^N(\ddc v)^n+\frac{DF_k}{R}.
\]
Now let $R\to\infty$, with $k$ fixed. Fatou on the left,
$0\leq\chi_R\leq1$, and the assumed finite mass on the right give
\[
 \int\left(\frac{a}{a+\delta_k}\right)^n\rho^n
 \leq q^n\int_M(\ddc v)^n.
\]
Finally $a>0$ almost everywhere, so Fatou as $k\to\infty$ proves
\begin{equation}\label{eq:ricci-mass-bound}
 0\leq\int_M\rho^n\leq q^n\int_M(\ddc v)^n<\infty.
\end{equation}
Each $F_k$ is finite, and the limit in $R$ is taken before the limit
in $k$. The Ricci integrals in these limits are understood as
nonnegative extended integrals until \eqref{eq:ricci-mass-bound} proves
their finiteness.
\end{proof}

In particular, Propositions~\ref{prop:finite-ma} and
\ref{prop:strict-approximation} prove \eqref{eq:finite-ricci} on the
simply connected noncompact nonflat manifold under consideration.

\section{Critical integrability and vanishing of closed forms}\label{sec:critical}

\subsection{A critical Stokes identity}
We first prove the Stokes identity used to compare the top integrals
of two closed forms that differ by $\ddc F$.

\begin{lemma}\label{lem:critical-stokes}
Let $M$ be complete noncompact K\"ahler with nonnegative bisectional
curvature and $n\geq2$. Let $\alpha_1,\ldots,\alpha_{n-1}$ be smooth
closed nonnegative real $(1,1)$-forms with $|\alpha_j|\in L^n(M)$.
Suppose $F$ is smooth, $|\nabla F|\in L^n(M)$, and
$\ddc F\wedge\alpha_1\wedge\cdots\wedge\alpha_{n-1}$ is integrable.
Then
\begin{equation}\label{eq:critical-stokes}
 \int_M\ddc F\wedge\alpha_1\wedge\cdots\wedge\alpha_{n-1}=0.
\end{equation}
\end{lemma}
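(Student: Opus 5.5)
Write $T=\alpha_1\wedge\cdots\wedge\alpha_{n-1}$. This is a smooth closed nonnegative $(n-1,n-1)$-form. By \eqref{eq:mixed-norm}-type estimates (applied with one factor replaced by a unit form), it satisfies $|T|\leq C\prod_j|\alpha_j|$ pointwise. By H\"older's inequality with $n-1$ factors in $L^n$, $|T|\in L^{n/(n-1)}(M)$. Since $|\nabla F|\in L^n$, another H\"older step gives
\[
 \int_M|dF|\,|T|\,\vol\leq\norm{\nabla F}_{L^n}\prod_j\norm{\alpha_j}_{L^n}<\infty .
\]
So $d^cF\wedge T$ is an integrable $(2n-1)$-form. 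The plan is to cut off by the functions $\chi_R$ of \eqref{eq:cutoffs}, integrate by parts once, and use this integrability to show that the boundary term vanishes.

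Concretely, $\ddc F\wedge T=d(d^cF\wedge T)$ up to the normalizing constant from \eqref{eq:dc-convention}, because $dT=0$. Compact Stokes \cite[Theorem~16.11]{LeeSM} then gives
\[
 \int_M\chi_R\,\ddc F\wedge T=-c\int_M d\chi_R\wedge d^cF\wedge T .
\]
For the left side: $\ddc F\wedge T$ is integrable by hypothesis, $0\leq\chi_R\leq1$, and $\chi_R\to1$ pointwise. Dominated convergence therefore makes the left side converge to $\int_M\ddc F\wedge T$. For the right side: $|d\chi_R|\leq C/R$, and the integrand is bounded by $C|d\chi_R|\,|dF|\,|T|$. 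The right side is therefore at most $(C/R)\int_M|dF|\,|T|\,\vol$, which tends to $0$ by the H\"older bound above. Hence the integral in \eqref{eq:critical-stokes} vanishes.

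The main point to check is the pointwise bound $|d\chi\wedge d^cF\wedge T|\leq C|d\chi|\,|dF|\prod_j|\alpha_j|$. It holds because all forms involved are smooth and the norms are comparable to coefficient bounds in a unitary frame. Alternatively, since $T\geq0$, one can apply the mixed Cauchy--Schwarz inequality behind \eqref{eq:weighted-young}, followed by \eqref{eq:form-order-bound}, to get
\[
 |d\chi\wedge d^cF\wedge T|\leq|d\chi|\,|dF|\,\omega\wedge T/\vol\leq C|d\chi|\,|dF|\prod_j|\alpha_j| .
\]

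The argument is critical in the sense of the section title: the factor $1/R$ from the cutoff just suffices. If the $L^n$ integrability were any weaker, one would need volume growth, for instance $\int|d\chi_R|^n\lesssim R^{-n}\mu_g(B(2R))$. Here the H\"older exponents $\tfrac1n+\tfrac{n-1}{n}=1$ close exactly, so no volume growth is needed, and the curvature hypothesis enters only through the existence of the exhaustion $b$ in \eqref{eq:exhaustion}.
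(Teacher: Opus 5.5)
Your proposal is correct and follows the paper's proof essentially step for step. You cut off with $\chi_R$ and apply compact Stokes; with the convention \eqref{eq:dc-convention} one has exactly $\ddc F\wedge T=d(d^cF\wedge T)$, so your constant $c$ equals $1$. You then bound the boundary density by $CR^{-1}|\nabla F|\prod_j|\alpha_j|$ via mixed Cauchy--Schwarz and \eqref{eq:form-order-bound}, conclude with H\"older, and use the assumed integrability of $\ddc F\wedge T$ for dominated convergence on the left.
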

\begin{proof}
Write $T=\alpha_1\wedge\cdots\wedge\alpha_{n-1}$ and use
\eqref{eq:cutoffs}. Compact Stokes gives
\[
 \int\chi_R\ddc F\wedge T=-\int d\chi_R\wedge d^cF\wedge T.
\]
The mixed Cauchy--Schwarz inequality and
\eqref{eq:form-order-bound} bound the absolute density on the right by
\[
 \frac{C_n}{R}|\nabla F|\prod_{j=1}^{n-1}|\alpha_j|.
\]
This product has exactly $n$ factors in $L^n$, and is therefore
integrable by H\"older. Its integral times $C_n/R$ tends to zero.
The assumed integrability of the left density permits dominated
convergence as $\chi_R\to1$, proving \eqref{eq:critical-stokes}.
\end{proof}

\begin{lemma}\label{lem:top-invariance}
Let $M$ satisfy the geometric hypotheses of
Lemma~\ref{lem:critical-stokes}, and let $A,B$ be smooth closed
nonnegative real $(1,1)$-forms with
$|A|,|B|\in L^n(M)$. If $A-B=\ddc F$ for a smooth $F$ with
$|\nabla F|\in L^n(M)$, then
\begin{equation}\label{eq:top-invariance}
 \int_M A^n=\int_M B^n.
\end{equation}
\end{lemma}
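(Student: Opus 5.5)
The plan is to telescope and apply Lemma~\ref{lem:critical-stokes} once for each term. Since $A-B=\ddc F$ and all forms are closed $(1,1)$-forms, they commute under wedge product. This gives the algebraic identity
\[
 A^n-B^n=\ddc F\wedge\sum_{j=0}^{n-1}A^j\wedge B^{n-1-j}.
\]
Each summand has the form $\ddc F\wedge\alpha_1\wedge\cdots\wedge\alpha_{n-1}$, where every $\alpha_i$ equals either $A$ or $B$. These are smooth, closed, nonnegative, and satisfy $|\alpha_i|\in L^n(M)$ by hypothesis. Together with $|\nabla F|\in L^n(M)$, this matches the hypotheses of Lemma~\ref{lem:critical-stokes}, except for the integrability of the top density.

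The remaining step is to verify that integrability. I will not estimate $\ddc F$ directly, since no pointwise control on $\nabla^2F$ is available. Instead I write
\[
 \ddc F\wedge T_j=A\wedge T_j-B\wedge T_j,\qquad T_j=A^j\wedge B^{n-1-j}.
\]
Both $A\wedge T_j$ and $B\wedge T_j$ are products of $n$ nonnegative forms. By \eqref{eq:mixed-norm}, each has density at most $n!$ times a product of $n$ factors drawn from $|A|$ and $|B|$. H\"older's inequality with all exponents equal to $n$ makes this product integrable. Hence $\ddc F\wedge T_j$ is integrable as a difference of integrable densities. The same bound shows that $\int_M A^n$ and $\int_M B^n$ are finite, so the equality \eqref{eq:top-invariance} is an identity between finite numbers.

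Applying Lemma~\ref{lem:critical-stokes} to each of the $n$ summands gives $\int_M\ddc F\wedge T_j=0$. Summing over $j$ and using linearity of the integral, which is legitimate because every term is integrable, gives $\int_M A^n-\int_M B^n=0$. The only point needing care is this integrability check; it is why the argument splits $\ddc F$ into the difference $A-B$ rather than bounding it directly.
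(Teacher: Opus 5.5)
Your proposal is correct and is essentially the paper's own proof. The paper also uses the telescoping identity, gets integrability of each summand by writing it as a difference of two $n$-fold products bounded via \eqref{eq:mixed-norm} and H\"older, and then applies Lemma~\ref{lem:critical-stokes} to each summand's $n-1$ remaining factors.
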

\begin{proof}
Every product of $n$ factors chosen from $A,B$ is integrable by
\eqref{eq:mixed-norm} and H\"older. Expand
$A^n-B^n=(A-B)\wedge\sum_{j=0}^{n-1}A^j\wedge B^{n-1-j}$.
Each summand is integrable as a difference of two such products.
Lemma~\ref{lem:critical-stokes} applied to its $n-1$ remaining factors
shows that its integral is zero. This proves the assertion.
\end{proof}

\subsection{Scalar heat estimates}
The following two lemmas concern the scalar heat semigroup on a complete
connected noncompact manifold with $\Ric\geq0$.

\begin{lemma}\label{lem:scalar-decay}
For every integer $p\geq2$ and $b\in L^p(M)$,
\begin{equation}\label{eq:scalar-decay}
 \norm{P_tb}_{L^p}\longrightarrow0\quad(t\to\infty).
\end{equation}
\end{lemma}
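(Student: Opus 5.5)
We prove \eqref{eq:scalar-decay} by a density argument combined with an $L^1$--$L^\infty$ interpolation. The semigroup $P_t$ is a contraction on every $L^p(M)$, $1\leq p\leq\infty$. This holds because the heat kernel is nonnegative, symmetric and stochastic on a complete manifold with $\Ric\geq0$, so Jensen's inequality applies with respect to the probability measure $H(t,x,y)\,d\mu_g(y)$. Hence the operators $P_t$ are uniformly bounded on $L^p$. It therefore suffices to prove the decay for $b$ in a dense subclass, say $b\in C_c^\infty(M)$. Indeed, given $b\in L^p$ and $\eta>0$, choose $b'\in C_c^\infty(M)$ with $\norm{b-b'}_{L^p}<\eta$. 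Then $\norm{P_tb}_{L^p}\leq\eta+\norm{P_tb'}_{L^p}$, and it remains to let $t\to\infty$ and then $\eta\to0$.

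For $b\in C_c^\infty(M)$ we have $b\in L^1\cap L^\infty$. The $L^1$ contraction gives $\norm{P_tb}_{L^1}\leq\norm{b}_{L^1}$. The elementary interpolation
\[
 \norm{P_tb}_{L^p}^p\leq\norm{P_tb}_{L^\infty}^{p-1}\norm{P_tb}_{L^1}
\]
then reduces the claim to the sup-norm decay $\norm{P_tb}_{L^\infty}\to0$. For this we use the Li--Yau on-diagonal upper bound,
\[
 H(t,x,y)\leq\frac{C}{\mu_g(B_x(\sqrt t))},
\]
which is among the Gaussian estimates already cited from \cite{Grigoryan}. It gives
\[
 |P_tb(x)|\leq\frac{C\norm{b}_{L^1}}{\mu_g(B_x(\sqrt t))}.
\]

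The main obstacle is to make this bound uniform in $x$, since the lower bound for $\mu_g(B_x(\sqrt t))$ varies with the centre. We use the support $K$ of $b$. Pick $o\in M$ and $R_0$ with $K\subset B_o(R_0)$. For points $x$ with $d(o,x)\leq\sqrt t$, the inclusion $B_o(\sqrt t)\subset B_x(2\sqrt t)$ and volume doubling give
\[
 \mu_g(B_x(\sqrt t))\geq c\,\mu_g(B_o(\sqrt t)).
\]
For points $x$ with $d(o,x)>\sqrt t$, every $y\in K$ satisfies $d(x,y)\geq d(o,x)-R_0$. We estimate $|P_tb(x)|$ by the full Gaussian bound
\[
 H(t,x,y)\leq\frac{C}{\mu_g(B_y(\sqrt t))}\,e^{-d(x,y)^2/(5t)},
\]
in which the volume is centred at $y\in K$. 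For $y\in K$ the same doubling comparison with $o$ yields $\mu_g(B_y(\sqrt t))\geq c\,\mu_g(B_o(\sqrt t))$ once $\sqrt t\geq R_0$. In either case we obtain
\[
 \norm{P_tb}_{L^\infty}\leq\frac{C\norm{b}_{L^1}}{\mu_g(B_o(\sqrt t))}.
\]

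Finally, $\mu_g(B_o(r))\to\infty$ as $r\to\infty$. This follows from Yau's theorem that a complete noncompact manifold with $\Ric\geq0$ has at least linear volume growth. Hence $\norm{P_tb}_{L^\infty}\to0$, and by the interpolation above $\norm{P_tb}_{L^p}\to0$ for every $p\geq2$. Only $p>1$ is actually needed here: for $p=1$ the interpolation gives nothing and the conclusion fails by stochastic completeness.
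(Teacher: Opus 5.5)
Your proof is correct, but it takes a different route from the paper.

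The paper uses no pointwise kernel bounds here. It invokes Yau's theorem only to know that $M$ has infinite volume. Then the $L^2$ kernel of the Laplacian, which consists of functions with zero weak gradient and hence of constants, is trivial. The spectral theorem and dominated convergence give $\norm{P_tg}_2\to0$ for every $g\in L^2$. For bounded $g\in L^2\cap L^p$, the interpolation $\norm{P_tg}_p^p\leq\norm{g}_\infty^{p-2}\norm{P_tg}_2^2$ yields the decay, and the same density-plus-contraction step you use finishes the proof.

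You instead interpolate between $L^1$ and $L^\infty$, and prove sup-norm decay from the Li--Yau upper bound, Bishop--Gromov doubling, and $\mu_g(B_o(r))\to\infty$.

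What each route buys:
\begin{itemize}
\item The paper's argument is softer. It needs only infinite volume and Markov contractivity, so it works on any complete manifold of infinite volume. However, it is purely qualitative and, as written, restricted to $p\geq2$.
\item Your argument gives an explicit rate, $\norm{P_tb}_\infty\leq C\norm{b}_1/\mu_g(B_o(\sqrt t))$ for bounded compactly supported $b$, and it covers every $1<p<\infty$ directly. The cost is the Gaussian heat-kernel estimates. The paper already cites these for \eqref{eq:second-moment}, so nothing new is really imported.
\end{itemize}

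Two small simplifications are available to you:
\begin{itemize}
\item The case split in $d(o,x)$ is unnecessary. By symmetry, $H(t,x,y)=H(t,y,x)\leq C/\mu_g(B_y(\sqrt t))$ with $y$ in the support of $b$. This gives the uniform bound for all $x$ at once, without the Gaussian factor.
\item You only need $\mu_g(B_o(r))\to\infty$, which is the same as infinite volume. Linear volume growth is more than required, and this is exactly the volume input the paper uses.
\end{itemize}
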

\begin{proof}
Yau's infinite volume theorem \cite{Yau} implies that $M$ has infinite
volume. The $L^2$ kernel of the
nonnegative scalar Laplacian consists of functions with zero weak
gradient, hence of constants by connectedness. No nonzero such
constant is in $L^2$. The spectral formula for the heat operator
\cite[Section~3]{Grigoryan} and dominated convergence therefore give
$\norm{P_tg}_2\to0$ for $g\in L^2$. For bounded $g\in L^2\cap L^p$,
Markov contractivity yields
\[
 \norm{P_tg}_p^p
 \leq\norm{g}_\infty^{p-2}\norm{P_tg}_2^2\longrightarrow0.
\]
Bounded functions of compact support are dense in $L^p$. Given
$b\in L^p$, approximate it by such a $g$. Since $P_t$ is an $L^p$
contraction,
$\norm{P_tb}_p\leq\norm{b-g}_p+\norm{P_tg}_p$. Take the limit in
$t$ and then improve the approximation. This proves
\eqref{eq:scalar-decay}.
\end{proof}

\begin{lemma}\label{lem:integrated-gradient}
If $p\geq2$ is an integer and $f\in C^\infty(M)\cap L^p(M)$, then, for every
$t>0$, the function
\begin{equation}\label{eq:Ft}
 F_t=2\int_0^tP_sf\,ds
\end{equation}
is smooth and $|\nabla F_t|\in L^p(M)$.
\end{lemma}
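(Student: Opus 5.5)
The plan is to show $|\nabla P_sf|\in L^p$ with an integrable-in-$s$ bound near $s=0$, and then combine these bounds by Minkowski's integral inequality. The key input is an $L^p$ gradient estimate for the heat semigroup on manifolds with $\Ric\geq0$: for $1<p<\infty$,
\[
 \norm{\nabla P_sg}_{L^p}\leq C_p s^{-1/2}\norm{g}_{L^p}.
\]
For $p\geq2$ I would derive it as follows. Bakry--\'Emery gives the pointwise bound $|\nabla P_sg|^2\leq P_s(|\nabla g|^2)$. Alternatively, and avoiding $\nabla g$, one can use the reverse local Poincar\'e inequality
\[
 s|\nabla P_sg|^2\leq C\bigl(P_s(g^2)-(P_sg)^2\bigr)\leq CP_s(g^2),
\]
which holds under $\Ric\geq0$ by differentiating $\Phi(r)=P_r((P_{s-r}g)^2)$ and using the Bochner formula. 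Then
\[
 \norm{\nabla P_sg}_p^p\leq C s^{-p/2}\int (P_s g^2)^{p/2}
 \leq Cs^{-p/2}\norm{g^2}_{p/2}^{p/2}=Cs^{-p/2}\norm{g}_p^p,
\]
using that $P_s$ is a contraction on $L^{p/2}$ (here $p/2\geq1$ because $p\geq2$). This is exactly where the hypothesis $p\geq2$ enters. Stochastic completeness and Markov positivity of $P_s$, recalled in Section~\ref{sec:smoothing}, justify the semigroup manipulations.

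Next comes smoothness. Since $f$ is smooth and in $L^p$, $u(s,x)=P_sf(x)$ is a solution of the heat equation that is smooth on $(0,\infty)\times M$. I would verify that it is continuous up to $s=0$ locally uniformly with all spatial derivatives. Local parabolic regularity near $s=0$ should give this, because $f$ is smooth and $u$ is locally bounded: for $p\geq2$, $|P_sf|\leq (P_s|f|^p)^{1/p}$ together with heat kernel upper bounds keeps $u$ locally bounded. Then $F_t=2\int_0^tu(s,\cdot)\,ds$ is smooth, with derivatives computed under the integral by dominated convergence on compact sets. Alternatively, one writes $\Delta F_t=2(P_tf-f)$ in the distributional sense and applies elliptic regularity, which is perhaps cleaner: $F_t\in L^p_{\mathrm{loc}}$ and $\Delta F_t$ is smooth, so $F_t$ is smooth.

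For the gradient, $\nabla F_t=2\int_0^t\nabla P_sf\,ds$. Minkowski's integral inequality and the estimate above give
\[
 \norm{\nabla F_t}_p\leq 2C_p\norm{f}_p\int_0^ts^{-1/2}\,ds
 =4C_p\sqrt t\,\norm{f}_p<\infty.
\]
I expect the main obstacle to be justifying rigorously the reverse Poincar\'e / Bakry--\'Emery bound on a noncompact manifold for $g\in L^p$ that is not assumed bounded or compactly supported. I would first prove it for $g\in C_c^\infty$, where the Bochner argument is standard under $\Ric\geq0$ using completeness and cutoffs. I would then pass to general $g$ by density in $L^p$: if $g_k\to g$ in $L^p$, then $P_sg_k\to P_sg$ locally smoothly, so Fatou transfers the bound. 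Finally, measurability and continuity of $s\mapsto\nabla P_sf$ in $L^p_{\mathrm{loc}}$ make the Bochner integral identification of $\nabla F_t$ legitimate.
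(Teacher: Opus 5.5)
Your proposal is correct and follows essentially the same route as the paper: the reverse Poincar\'e inequality under $\Ric\geq0$, combined with $L^{p/2}$-contractivity (the paper's ``Jensen'' step, which is where $p\geq2$ enters), gives $\norm{\nabla P_sf}_p\leq Cs^{-1/2}\norm{f}_p$; Minkowski's inequality integrates this over $(0,t)$; and smoothness comes from $\Delta F_t=2(P_tf-f)$ in the weak sense plus elliptic regularity, or from local heat convergence to the smooth initial datum. Your density-and-Fatou argument for general $g\in L^p$ is a reasonable way to fill in a detail that the paper leaves to the cited reference.
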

\begin{proof}
The scalar reverse Poincar\'e inequality and Jensen give
\cite[equation~(2.1) and the following estimate]{Ledoux}
\[
 \norm{\nabla P_sf}_p\leq(2s)^{-1/2}\norm{f}_p\qquad(p\geq2).
\]
Minkowski's inequality, first on $[\epsilon,t]$ and then as
$\epsilon\downarrow0$,
gives a weak gradient in $L^p$ with
\begin{equation}\label{eq:Ft-gradient}
 \norm{\nabla F_t}_p
 \leq2\int_0^t(2s)^{-1/2}\norm{f}_p\,ds<\infty.
\end{equation}
Contractivity also gives $\norm{F_t}_p\leq2t\norm{f}_p$. In the weak
sense $\Delta F_t=2(P_tf-f)$. The right side is smooth, so local
elliptic regularity makes $F_t$ smooth and identifies its weak gradient
with its ordinary metric gradient. Equivalently, this follows by smooth
local heat convergence to the smooth initial function.
\end{proof}

\subsection{Closed Hodge heat and vanishing}
For a real $(1,1)$-form let $\Delta_d=d d^*+d^*d$ be the nonnegative
real Hodge Laplacian. We use the heat equation
$\partial_t\eta=-\Delta_d\eta$, consistent with the real scalar
Laplacian in \eqref{eq:trace-real}. Here $d^*$ is the $L^2$ adjoint
of the exterior derivative.

\begin{proposition}\label{prop:critical-vanishing}
Let $M$ be a connected complete noncompact K\"ahler manifold with
nonnegative holomorphic bisectional curvature and $n\geq2$. Let
$\alpha$ be a smooth closed nonnegative real $(1,1)$-form. Suppose
that $|\alpha|$ and the smooth function $f=\Lambda_\omega\alpha$
belong to $L^n(M)$. Then
\begin{equation}\label{eq:critical-zero}
 \int_M\alpha^n=0.
\end{equation}
\end{proposition}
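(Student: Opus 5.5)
We follow the critical vanishing method of Liu and Ni \cite[Section~4]{LiuNi}, using the Hodge heat construction of Ni and Tam \cite[Theorem~4.1]{NiTam13}.

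\emph{Heat flow and its basic properties.} Let $\alpha(t)$ be the solution of $\partial_t\eta=-\Delta_d\eta$ with initial value $\alpha$. Under nonnegative bisectional curvature and $|\alpha|\in L^n$, the cited theorem should give the following.
\begin{itemize}
\item[(i)] $\alpha(t)$ is smooth, closed, real $(1,1)$ and nonnegative for $t>0$.
\item[(ii)] $|\alpha(t)|\in L^n$, with $\norm{\alpha(t)}_{L^n}\le\norm{\alpha}_{L^n}$.
\item[(iii)] Its trace is the scalar heat evolution of the trace, $\Lambda_\omega\alpha(t)=P_tf$. This holds because $\Lambda_\omega$ commutes with $\Delta_d$ on a K\"ahler manifold and uniqueness holds in the $L^n$ class.
\item[(iv)] $\alpha(t)-\alpha=\ddc F_t$ with $F_t=2\int_0^tP_sf\,ds$ as in \eqref{eq:Ft}.
\end{itemize}
Property (iv) follows from $\partial_t\alpha(t)=-dd^*\alpha(t)$ on closed forms. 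On closed $(1,1)$-forms the K\"ahler identities give $d^*\eta=d^c\Lambda_\omega\eta$ up to the sign and factor fixed by \eqref{eq:dc-convention} and \eqref{eq:trace-real}. Hence $-dd^*\alpha(t)=\ddc(2P_tf)$, and integrating in time gives (iv). The factor $2$ must be matched to $\Lambda_\omega\ddc=\frac12\Delta$: checking the trace gives $\Lambda_\omega\ddc F_t=\int_0^t\Delta P_sf\,ds=P_tf-f$, which is consistent.

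\emph{Preservation of the top integral.} By Lemma~\ref{lem:integrated-gradient} with $p=n\ge2$, $F_t$ is smooth and $|\nabla F_t|\in L^n$. Lemma~\ref{lem:top-invariance} applied with $A=\alpha(t)$ and $B=\alpha$ gives $\int_M\alpha(t)^n=\int_M\alpha^n$ for every $t>0$.

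\emph{Decay.} Since $\alpha(t)\ge0$, its eigenvalues relative to $\omega$ are nonnegative and sum to $\Lambda_\omega\alpha(t)$. The arithmetic--geometric mean inequality, or simply \eqref{eq:form-order-bound} with $|\alpha(t)|\le C\Lambda_\omega\alpha(t)$ for nonnegative forms, then gives
\[
0\le\frac{\alpha(t)^n}{\vol}\le C_n(P_tf)^n .
\]
Here we use $P_tf\ge0$, which holds because $f\ge0$. Hence
\[
0\le\int_M\alpha^n=\int_M\alpha(t)^n\le C_n\norm{P_tf}_{L^n}^n .
\]
The right side tends to $0$ as $t\to\infty$ by Lemma~\ref{lem:scalar-decay} with $p=n$. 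This yields \eqref{eq:critical-zero}.

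\emph{Main obstacle.} The hard step is justifying the heat construction in the $L^n$ class: existence, preservation of closedness and nonnegativity, the identity $\Lambda_\omega\alpha(t)=P_tf$, and the exact potential (iv) with $F_t$ equal to the scalar integral rather than merely some potential. The first point I would check is that the Ni--Tam solution is the one given by the Hodge heat kernel, and that uniqueness for bounded-in-$L^n$ solutions identifies its trace with $P_tf$. This uses the Bochner formula on $(1,1)$-forms together with nonnegative bisectional curvature, since that formula is what gives the maximum-principle control behind both positivity and uniqueness.
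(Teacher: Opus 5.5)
Your proposal is correct and follows essentially the same route as the paper: the Ni--Tam closed Hodge heat flow, the trace identity $\Lambda_\omega\eta_t=P_tf$ via $L^n$ uniqueness, the exact transgression $\eta_t-\alpha=\ddc F_t$ with $F_t=2\int_0^tP_sf\,ds$, Lemmas~\ref{lem:integrated-gradient} and \ref{lem:top-invariance} to keep the top integral constant in $t$, and $L^n$ heat decay from Lemma~\ref{lem:scalar-decay}. The differences are minor. You bound $\eta_t^n$ by $n!(P_tf/n)^n$ using the arithmetic--geometric mean inequality on the trace, whereas the paper uses the domination $|\eta_t|\le P_t|\alpha|$ together with \eqref{eq:mixed-norm}. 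The paper also checks the Ni--Tam growth hypothesis $R^{-2}\mu_g(B_o(R))^{-1}\int_{B_o(R)}|\alpha|\to0$ explicitly by H\"older; you should add that line rather than leave it implicit.
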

\begin{proof}
The closed Hodge heat theorem of Ni and Tam
\cite[Theorem~4.1 and Lemma~3.2]{NiTam13} applies to $\alpha$.
Indeed, for $R\geq1$, H\"older and the positive volume of a unit ball
give
\[
 \frac{1}{R^2\mu_g(B_o(R))}\int_{B_o(R)}|\alpha|\vol
 \leq\frac{\norm{|\alpha|}_{L^n}}
              {R^2\mu_g(B_o(1))^{1/n}}\longrightarrow0.
\]
The resulting smooth real evolution is closed and satisfies
\begin{equation}\label{eq:hodge-domination}
 |\eta_t|\leq P_t|\alpha|\qquad(t>0).
\end{equation}
Its nonnegativity follows by positivity preservation for $(1,1)$
Hodge heat under nonnegative bisectional curvature, as used in
\cite[Section~5]{NiTam13}. Tracing gives
$\Lambda_\omega\eta_t=P_tf$: both sides solve the same scalar heat
equation and lie in the $L^n$ uniqueness class by
\eqref{eq:hodge-domination}. For closed $(1,1)$-forms, the K\"ahler
identities give
\[
 -\Delta_d\eta_t=2\ddc\Lambda_\omega\eta_t.
\]
Integration from zero, using local initial convergence, yields the
exact transgression
\begin{equation}\label{eq:hodge-transgression}
 \eta_t-\alpha=\ddc F_t,\qquad
 F_t=2\int_0^tP_sf\,ds.
\end{equation}
The identity is also obtained directly from the calculation in
\cite[proof of Theorem~2.1]{NiTam13}. In particular its coefficient
is 2 for our real-Laplacian convention.

By \eqref{eq:hodge-domination} and scalar contractivity,
$|\eta_t|\in L^n$. Lemma~\ref{lem:integrated-gradient} gives
$|\nabla F_t|\in L^n$ for every fixed $t>0$. Apply
Lemma~\ref{lem:top-invariance} to \eqref{eq:hodge-transgression}:
\begin{equation}\label{eq:hodge-mass-constant}
 \int_M\eta_t^n=\int_M\alpha^n\qquad(t>0).
\end{equation}
On the other hand, positivity, \eqref{eq:mixed-norm}, and
\eqref{eq:hodge-domination} give
\[
 0\leq\int_M\eta_t^n\leq n!\norm{P_t|\alpha|}_n^n\longrightarrow0
\]
by Lemma~\ref{lem:scalar-decay}. Equation~\eqref{eq:hodge-mass-constant}
therefore proves \eqref{eq:critical-zero}.
\end{proof}

\section{Proof of the main theorem}\label{sec:conclusion}

\begin{lemma}\label{lem:finite-mass-flat}
Suppose $M$ is complete connected noncompact K\"ahler, $n\geq2$,
has nonnegative holomorphic bisectional curvature, and satisfies
\eqref{eq:pinching}. If $\int_M\rho^n<\infty$, then $M$ is flat.
\end{lemma}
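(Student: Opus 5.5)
The plan is to feed $\alpha=\rho$ into Proposition~\ref{prop:critical-vanishing} and then use pinching to turn the vanishing of $\int_M\rho^n$ into $\rho\equiv0$. First, $\rho$ is a smooth closed real $(1,1)$-form. Since nonnegative bisectional curvature gives $\Ric\geq0$, it is nonnegative.

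The first step is a pointwise comparison between $\rho^n/\vol$ and the powers of $|\rho|$ and $\Lambda_\omega\rho$. At a point, diagonalize $\Ric$ with respect to a unitary frame. On a K\"ahler manifold the eigenvalues of $g^{-1}\Ric$ occur in $J$-invariant pairs, so we get eigenvalues $\lambda_1,\ldots,\lambda_n\geq0$, each with multiplicity two. Then:
\begin{itemize}
\item $\rho^n/\vol=n!\prod_j\lambda_j$ up to the fixed normalization;
\item $|\rho|$ is comparable to $(\sum\lambda_j^2)^{1/2}$, hence to $\lambda_{\max}=\Ric_{\max}$;
\item $\Lambda_\omega\rho=\sum\lambda_j$ is also comparable to $\Ric_{\max}$.
\end{itemize}
Pinching \eqref{eq:pinching} gives $\lambda_j\geq\kappa\lambda_{\max}$ for every $j$, so
\[
 \rho^n/\vol\geq n!\,\kappa^{n-1}\Ric_{\max}^n\geq c(n,\kappa)\,|\rho|^n .
\]
Consequently $\int_M\rho^n<\infty$ implies $|\rho|\in L^n(M)$. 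Since $0\leq\Lambda_\omega\rho\leq n|\rho|$ up to normalization, the trace $f=\Lambda_\omega\rho$ also lies in $L^n(M)$. We also have $f=S/2$ in the paper's conventions, but only the comparability is needed.

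Next, apply Proposition~\ref{prop:critical-vanishing}, whose hypotheses are exactly what has now been verified: $M$ is complete, connected and noncompact, $n\geq2$, the bisectional curvature is nonnegative, $\alpha=\rho$ is smooth, closed and nonnegative, and $|\rho|,\Lambda_\omega\rho\in L^n$. It gives $\int_M\rho^n=0$. Because the integrand $\rho^n/\vol$ is nonnegative and continuous, it vanishes identically. By the lower bound above, $|\rho|\equiv0$, so $\Ric\equiv0$. The argument in the subsection ``Pinching and products'' then applies: the bisectional curvatures are nonnegative and their traces vanish, so they vanish, and polarization of the K\"ahler curvature tensor gives flatness.

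I expect the main obstacle to be the normalization bookkeeping in the pointwise comparison. One has to check that the pairing of real eigenvalues under $J$ produces exactly $n$ complex eigenvalues, each bounded below by $\kappa\lambda_{\max}$. The constants themselves are irrelevant. The key check is that the nonnegativity of the top form and the lower bound $c|\rho|^n$ both hold under the paper's conventions \eqref{eq:levi-normalizations} and \eqref{eq:mixed-norm}; I would verify these in a unitary frame where $\rho$ is diagonal.
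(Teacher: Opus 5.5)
Your proposal is correct and is essentially the paper's own argument: pinching gives $\rho^n/\vol\geq c(n,\kappa)\Ric_{\max}^n$, which controls both $|\rho|^n$ and $(\Lambda_\omega\rho)^n$. Proposition~\ref{prop:critical-vanishing} then applies to $\alpha=\rho$, and the vanishing of the nonnegative continuous density forces $\Ric\equiv0$ and hence flatness; your constant $\kappa^{n-1}$, which keeps $\lambda_{\max}$ itself as one factor, is marginally sharper than the paper's $\kappa^n$, which is immaterial.
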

\begin{proof}
Let $\lambda_1,\ldots,\lambda_n\geq0$ be the eigenvalues of $\rho$
relative to $\omega$, and let $m=\max\lambda_j$. Pinching gives
$\lambda_j\geq\kappa m$, and hence
\[
 \prod_j\lambda_j\geq\kappa^n m^n,\qquad
 |\rho|^n\leq n^{n/2}m^n,\qquad
 (\Lambda_\omega\rho)^n\leq n^n m^n.
\]
Since $\rho^n=n!\prod_j\lambda_j\vol$, the finite mass hypothesis
implies $|\rho|,\Lambda_\omega\rho\in L^n(M)$. The Ricci form is
smooth, closed, and nonnegative. Proposition~\ref{prop:critical-vanishing}
therefore gives $\int_M\rho^n=0$. Its nonnegative continuous density
must vanish everywhere: otherwise it is positive on a nonempty open
set, which has positive Riemannian volume. The first displayed
inequality forces $m=0$ at every point, so $\Ric\equiv0$.
Tracing the nonnegative bisectional curvature in any unitary frame
then gives zero in every bisectional direction. The K\"ahler curvature
symmetries and polarization imply $\Rm_g\equiv0$.
\end{proof}

\begin{proof}[Proof of Theorem~\ref{thm:main}]
Assume that $M$ is noncompact. First suppose that it is simply
connected. If it were nonflat, Lemmas~\ref{lem:no-product} and
\ref{lem:source-data}, together with Proposition~\ref{prop:resolvent} and
Lemma~\ref{lem:limit-tests},
would give the finite-mass nonpluriharmonic PSH function of
Proposition~\ref{prop:finite-ma}. Proposition~\ref{prop:strict-approximation}
and Lemma~\ref{lem:canonical-weight} apply. Thus
Proposition~\ref{prop:finite-ricci-criterion} gives
$\int_M\rho^n<\infty$. Lemma~\ref{lem:finite-mass-flat} contradicts
nonflatness. Hence every simply connected noncompact example is flat.

For general $M$, let $\pi:\widetilde M\to M$ be its connected
universal covering and pull back $g,J$. The Riemannian
covering construction \cite[Chapters~2, 5, 6]{LeeRM} gives a complete
simply connected K\"ahler manifold and a local K\"ahler isometry.
The covering is second countable, since the fundamental group of a
second-countable manifold is countable. The curvature tensor and its
Ricci trace are preserved by the differential of $\pi$. Thus
$\widetilde M$ has nonnegative bisectional curvature and satisfies
\eqref{eq:pinching} with the same $\kappa$. It is noncompact: a
continuous surjective image of a compact covering space would make
$M$ compact. The simply connected case gives $\Rm_{\widetilde g}=0$.
For any $x\in M$, choose a lift $\widetilde x$; the tangent isometry
$d\pi_{\widetilde x}$ is surjective and intertwines the curvature
tensors. Therefore $\Rm_g(x)=0$. This proves the noncompact case and
proves the theorem.
\end{proof}

\begin{remark}\label{rem:dimension}
The hypothesis $n\geq2$ is used both in the determinant-one test of
Lemma~\ref{lem:limit-tests} and in the critical scalar gradient estimate,
which is applied with $p=n\geq2$.
\end{remark}

\end{document}